\begin{document}

\numberwithin{equation}{section}

\allowdisplaybreaks

\renewcommand{\PaperNumber}{097}

\FirstPageHeading

\renewcommand{\thefootnote}{$\star$}

\ShortArticleName{Dif\/ferential Invariants of  Conformal and Projective Surfaces}

\ArticleName{Dif\/ferential Invariants of Conformal \\  and Projective Surfaces\footnote{This paper is a
contribution to the Proceedings of the 2007 Midwest
Geometry Conference in honor of Thomas~P.\ Branson. The full collection is available at
\href{http://www.emis.de/journals/SIGMA/MGC2007.html}{http://www.emis.de/journals/SIGMA/MGC2007.html}}}

\Author{Evelyne HUBERT~$^\dag$ and Peter J. OLVER~$^\ddag$}

\AuthorNameForHeading{E.~Hubert and P.J.~Olver}

\Address{$^\dag$~INRIA, 06902 Sophia Antipolis, France}

\EmailD{\href{mailto:Evelyne.Hubert@inria.fr}{Evelyne.Hubert@inria.fr}}
\URLaddressD{\url{http://www.inria.fr/cafe/Evelyne.Hubert}}

\Address{$^\ddag$~School of Mathematics, University of Minnesota,
Minneapolis 55455, USA}

\EmailD{\href{mailto:olver@math.umn.edu}{olver@math.umn.edu}}

\URLaddressD{\url{http://www.math.umn.edu/~olver}}

\ArticleDates{Received August 15, 2007, in f\/inal form September 24,
2007; Published online October 02, 2007}

\Abstract{We show that, for both the conformal and projective groups, all the
dif\/ferential invariants of a generic surface in three-dimensional
space can be written as combinations of the invariant derivatives of a
single dif\/ferential invariant.  The proof is based on the equivariant
method of moving frames.}

\Keywords{conformal dif\/ferential geometry;
projective dif\/ferential geometry;
dif\/ferential invariants;
moving frame;
syzygy;
dif\/ferential algebra}

\Classification{14L30; 70G65;
53A30;
53A20;
53A55;
12H05}

\renewcommand{\thefootnote}{\arabic{footnote}}
\setcounter{footnote}{0}

\section{Introduction}

According to Cartan, the local geometry of submanifolds under transformation groups, including equivalence and symmetry properties, are entirely governed by their dif\/ferential invariants.  Familiar examples are curvature and torsion of a curve in three-dimensional Euclidean space, and the Gauss and mean curvatures of a surface, \cite{Gug,E,Spivak3}.

In general, given a Lie group $G$ acting on a manifold $M$, we are interested in studying  its induced action on submanifolds $S \subset M$ of a prescribed dimension, say $p<m = \dim M$.  To this end, we prolong the group action to the submanifold jet bundles ${\rm J}^n = {\rm J}^n(M,p)$ of order $n \geq 0$,~\cite{E}.  A \textit{differential invariant} is a (perhaps locally def\/ined) real-valued function $I \colon {\rm J}^n \to {\mathbb R}$ that is invariant under the prolonged group action.    Any f\/inite-dimensional Lie group action admits an inf\/inite number of functionally independent dif\/ferential invariants of progressively higher and higher order.  Moreover, there always exist $p = \dim S$ linearly independent invariant dif\/ferential operators ${\mathcal D}_1,\dots,{\mathcal D}_p$.  For curves, the invariant dif\/ferentiation is with respect to the group-invariant arc length parameter; for Euclidean surfaces, with respect to the diagonalizing Frenet frame, \cite{Gug,KOivb,MB1,MBdicc,MBp}.
 The \textit{Fundamental Basis Theorem}, f\/irst formulated by Lie, \cite[p.~760]{LieCG},  states that all the dif\/ferential invariants can be generated from a f\/inite number of low order invariants by repeated invariant dif\/ferentiation.  A modern statement and proof of Lie's Theorem can be found, for instance, in \cite{E}.

A basic question, then, is to f\/ind a minimal set of generating dif\/ferential invariants.  For curves, where $p=1$, the answer is  known:
under mild restrictions on the group action (spe\-cif\/i\-cal\-ly transitivity and no pseudo-stabilization under prolongation), there are exactly \mbox{$m-1$} generating dif\/ferential invariants, and any other dif\/ferential invariant is a function of the generating invariants and their successive derivatives with respect to arc length  \cite{E}.
  Thus, for instance, the dif\/ferential invariants of a space curve $C \subset {\mathbb R}^3$ under the action of the Euclidean group ${\rm SE}(3)$, are generated by $m-1 = 2$ dif\/ferential invariants, namely its curvature and torsion.

In \cite{Osurf}, it was proved, surprisingly that, for generic surfaces in three-dimensional space under the action of either the Euclidean or equi-af\/f\/ine (volume-preserving af\/f\/ine) groups, a minimal system of generating dif\/ferential invariants consists of a \textit{single} dif\/ferential invariant.  In the Euclidean case, the mean curvature serves as a generator of the Euclidean dif\/ferential invariants under invariant dif\/ferentiation.  In particular, an explicit, apparently new formula expressing the Gauss curvature as a rational function of derivatives of the mean curvature with respect to the Frenet frame was found.  In the equi-af\/f\/ine case, there is a single third order dif\/ferential invariant, known as the Pick invariant, \cite{Simon,Spivak3}, which was shown to generate all the equi-af\/f\/ine dif\/ferential invariants through invariant dif\/ferentiation.

In this paper, we extend this research program to study the dif\/ferential invariants of surfaces in~${\mathbb R}^3$ under the action of the conformal and the projective groups.  Tresse classif\/ied the dif\/ferential invariants in both cases in 1894,
 \cite{Tressedi}. Subsequent developments in conformal geometry can be found in \cite{AGc,BaEaGr,FefGra, Vessiotc}, as well as the work of Tom Branson and collaborators surveyed in the papers in this special issue, while \cite{AGp,FC,MBO} present results on the projective geometry of submanifolds.

The goal of this note is to prove that, just as in the Euclidean and equi-af\/f\/ine cases, the dif\/ferential invariants of both actions are generated by a single dif\/ferential invariant though invariant dif\/ferentiation with respect to the induced Frenet frame. However, lest one be tempted to na\"ively generalize these results, \cite{Ogdi} gives examples of f\/inite-dimensional Lie groups acting on surfaces in ${\mathbb R}^3$ which require an arbitrarily large number of generating dif\/ferential invariants.
Our two main results are:

\begin{theorem}\label{conformal:th}  Every differential invariant of a
  generic surface $S \subset {\mathbb R}^3$ under the action of the conformal
  group ${\rm SO}(4,1)$ can be written in terms of a single third order
  invariant and its invariant derivatives.
\end{theorem}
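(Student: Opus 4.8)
The plan is to apply the equivariant moving frame calculus to the prolongation of $G={\rm SO}(4,1)$ acting on the surface jet bundles ${\rm J}^n={\rm J}^n({\mathbb R}^3,2)$. Since $\dim G=10$ while $\dim{\rm J}^2=8$ and $\dim{\rm J}^3=12$, the prolonged action cannot be locally free at order two, so the first step is to show that it \emph{is} locally free and regular on a dense open subset of ${\rm J}^3$, namely on the $3$-jets of the \emph{generic} surfaces (those meeting an explicit nondegeneracy condition, a conformal refinement of non-umbilicity). Concretely, I would check that the prolonged vector fields spanning the conformal Lie algebra action are linearly independent at a generic $3$-jet. Then I would choose a cross-section to the prolonged orbits and build the resulting conformal moving frame $\rho\colon{\rm J}^3\to G$ --- this is nothing but the adapted M\"obius frame along the surface. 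Invariantization $\iota$ now yields, at each order, a complete system of normalized differential invariants, together with the two invariant differential operators ${\mathcal D}_1,{\mathcal D}_2$ dual to the invariant horizontal coframe. Because $\dim{\rm J}^3-\dim G=2$ and there are no invariants of order $\le 2$, there are exactly two functionally independent differential invariants of order three; let $J$ be the designated one and $K$ the other.

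The next step is to write out the recurrence formulas ${\mathcal D}_i\,\iota(F)=\iota({\mathcal D}_iF)+\sum_\kappa R_i^\kappa\,\iota(\hat{v}_\kappa(F))$ for an arbitrary differential function $F$, where the $\hat{v}_\kappa$ run through the prolonged generators of the conformal Lie algebra and the Maurer--Cartan invariants $R_i^\kappa$ are fixed by the phantom recurrences coming from the cross-section normalizations. These formulas determine the entire differential invariant algebra symbolically, without any coordinate computation: by the general generation theorem for free prolonged actions, it is generated, as a differential algebra under ${\mathcal D}_1$ and ${\mathcal D}_2$, by the normalized invariants of order $\le 4$, modulo finitely many syzygies, and the commutator has the form $[{\mathcal D}_1,{\mathcal D}_2]=Y^1{\mathcal D}_1+Y^2{\mathcal D}_2$ with commutator invariants $Y^i$ themselves computed from the recurrences.

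The core of the argument is then to collapse this generating set to the single invariant $J$. Applying the recurrence formulas to $J$ and to $K$ and comparing the results produces a system of relations among $J$, $K$, the commutator invariants $Y^i$, and the order-four normalized invariants; I expect that --- owing to the particular structure of the conformal recurrence relations, which here play the role that the Gauss and Codazzi equations play in the Euclidean case --- this system can be solved generically, as an identity of rational functions on ${\rm J}^n$, for $K$ and for all the order-four generators in terms of $J$ and its iterated invariant derivatives ${\mathcal D}_{i_1}\cdots{\mathcal D}_{i_k}J$. Once the order-$\le 4$ generators are expressed this way, the generation theorem carries the conclusion up to all orders. The main obstacle is precisely this solvability: nothing a priori forces the second third-order invariant $K$ to be recoverable from derivatives of $J$, and proving it requires pinpointing which fourth-order invariants occur in the relevant recurrences, checking that the coefficient determinant of the resulting linear system does not vanish identically on the generic locus, and inverting it --- the step that, as in \cite{Osurf}, should yield an explicit and apparently new formula, the conformal analogue of writing the Gauss curvature in terms of the mean curvature and its Frenet derivatives. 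A subsidiary technical point is to fix the precise generic hypothesis and to confirm that the moving frame, the recurrence coefficients, and the final rational expressions are all defined there.
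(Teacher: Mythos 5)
Your overall framework --- a moving frame on ${\rm J}^3$, normalized invariants, recurrence formulae, and differential elimination --- is exactly the paper's, and your step expressing the fourth-order normalized invariants in terms of the two third-order invariants and their invariant derivatives is carried out in the paper essentially as you describe (via the recurrences $E_{301},\dots,E_{032}$, the commutator syzygies $S_{14},\dots,S_{41}$, and a differential elimination). But the decisive step, which you correctly flag as the ``main obstacle'' and leave open, is not closed by the mechanism you propose. The recurrence relations linking third- and fourth-order invariants express $I_{31}$, $I_{22}$, $I_{13}$ in terms of \emph{both} third-order invariants; solving that linear system does not eliminate the second third-order invariant $K$ in favour of $J$, since every equation in it is consistent with $J$ and $K$ remaining independent generators. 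So your plan, as stated, proves only that the two third-order invariants generate --- not that one does.

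The idea that closes the gap is the commutator trick of Theorem~\ref{ci:th} combined with a structural accident of the conformal action: the two basic third-order invariants are, up to constant multiples, precisely the commutator invariants $\phi,\psi$ appearing in $[{\mathcal D}_2,{\mathcal D}_1]=\phi\,{\mathcal D}_1+\psi\,{\mathcal D}_2$ (for the hyperbolic cross-section, $\phi=-\tfrac14 I_{30}$ and $\psi=\tfrac14 I_{03}$). Applying this commutation identity to the invariant $\phi$ itself gives ${\mathcal D}_2{\mathcal D}_1\phi-{\mathcal D}_1{\mathcal D}_2\phi=\phi\,{\mathcal D}_1\phi+\psi\,{\mathcal D}_2\phi$, which is solved for $\psi$ as a rational function of $\phi$ and its derivatives whenever ${\mathcal D}_2\phi\neq0$, and symmetrically for $\phi$ in terms of $\psi$ when ${\mathcal D}_1\psi\neq0$; the genericity hypothesis is exactly $({\mathcal D}_1\psi)^2+({\mathcal D}_2\phi)^2\neq0$. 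Without identifying this coincidence between the third-order normalized invariants and the commutator invariants, the coefficient-determinant computation you propose has nothing to invert, and the argument stalls at the point you yourself single out.
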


\begin{theorem}\label{projective:th}   Every differential invariant of
  a generic surface $S \subset {\mathbb R}^3$ under the action of the
  projective group ${\rm PSL}(4)$ can be written in terms of a single
  fourth order invariant and its invariant derivatives.
\end{theorem}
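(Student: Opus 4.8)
The plan is to prove this theorem, and in the same breath its conformal counterpart Theorem~\ref{conformal:th}, by the equivariant method of moving frames, following the template used for the Euclidean and equi-affine groups in \cite{Osurf}; I describe the projective case, the conformal one being entirely parallel with $\mathrm{SO}(4,1)$ (dimension $10$) replacing $\mathrm{PSL}(4)$ (dimension $15$). First I would prolong the projective action to the surface jet bundles $\mathrm{J}^n = \mathrm{J}^n(\mathbb{R}^3,2)$, of dimension $\dim \mathrm{J}^n = 2 + \binom{n+2}{2}$, and check that since $\dim\mathrm{J}^3 = 12 < 15 \le 17 = \dim\mathrm{J}^4$ the prolonged action is locally free and regular on a dense open subset of $\mathrm{J}^4$ — this genericity is precisely what breaks down in the cautionary examples of \cite{Ogdi}. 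Choosing a coordinate cross-section that normalizes the $15$ group parameters against suitable jet coordinates of orders $\le 4$ then produces a right moving frame $\rho\colon\mathrm{J}^4\to\mathrm{PSL}(4)$, the invariantization operator $\iota$, a complete system of normalized differential invariants, and the dual invariant differential operators $\mathcal{D}_1,\mathcal{D}_2$ — the projective Frenet differentiations. Since the action is transitive on an open subset of $\mathrm{J}^3$, every coordinate through order $3$ is phantom, and the remaining three parameters are normalized against three of the five order-$4$ coordinates; thus the $17-15=2$ non-phantom normalized invariants both have order exactly $4$, which already explains the order appearing in the statement.

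The workhorse is the universal recurrence formula: for any differential function $F$,
\[
  \mathcal{D}_j\,\iota(F) \;=\; \iota\!\left(\frac{\partial F}{\partial x^j}\right) + \sum_{\kappa=1}^{15} R^\kappa_j\,\iota\bigl(\mathbf{v}_\kappa(F)\bigr),
\]
where $\mathbf{v}_1,\dots,\mathbf{v}_{15}$ span the projective infinitesimal generators and the Maurer--Cartan invariants $R^\kappa_j$ are fixed by demanding that the formula return $0$ on the phantom invariants. The decisive point is that every $R^\kappa_j$, hence every instance of the recurrence, is computable by linear algebra from the polynomial prolonged vector-field coefficients and the linear cross-section equations, with no need for closed-form expressions for $\rho$ or the invariants. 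Two consequences then follow routinely: by the standard reconstruction/symbol argument (\cite{E,Osurf}), differentiating the invariants of order $n$ yields, modulo invariants of lower order, all invariants of order $n+1$, so the differential invariant algebra is generated by the two fourth-order invariants together with the structure invariants $Y_1,Y_2$ defined by $[\mathcal{D}_1,\mathcal{D}_2]=Y_1\,\mathcal{D}_1+Y_2\,\mathcal{D}_2$; and the recurrences on the phantom invariants express $Y_1$ and $Y_2$ themselves as rational functions of those two invariants and their $\mathcal{D}$-derivatives.

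The heart of the proof, and what I expect to be its main obstacle, is the reduction from two generators to one. Following \cite{Osurf}, I would single out one fourth-order invariant $K$ — choosing the cross-section so that its recurrence relations solve for higher-order normalized invariants with unit leading coefficient — and then set up and solve the overdetermined linear system formed by the recurrences through order $6$ together with the Codazzi-type syzygies among the differentiated invariants, so as to express the second fourth-order invariant (and hence $Y_1$, $Y_2$, and all remaining invariants) as a rational function of $K$, $\mathcal{D}_1K$, $\mathcal{D}_2K$ and their iterates $\mathcal{D}_IK$. The difficulty is genuinely computational and structural: the projective prolongation through order six is lengthy, one must verify that the system actually \emph{determines} the second invariant rather than merely imposing a differential constraint on it, and the various Jacobian minors and leading coefficients that must not vanish for the elimination to succeed are exactly what "generic surface" is taken to mean. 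Exhibiting a single surface on which all these conditions hold shows the generic locus is nonempty and completes the argument; Theorem~\ref{conformal:th} is then proved identically, with the cross-section and recurrences truncated at order three.
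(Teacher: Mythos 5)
Your overall framework --- prolongation, a minimal-order cross-section whose two non-phantom invariants first appear at order four, recurrence formulae determined by linear algebra from the infinitesimal generators, and a concluding differential elimination --- is indeed the paper's framework, and your dimension count is correct. But there are two genuine problems. First, you dispose of the passage from the full generating set to the two fourth-order invariants as routine, asserting that ``differentiating the invariants of order $n$ yields, modulo invariants of lower order, all invariants of order $n+1$.'' That assertion is false in general --- the examples in \cite{Ogdi} of surface actions requiring arbitrarily many generators are counterexamples to exactly this claim --- and it is not routine here either: the recurrence formulae \eqref{rf:eq} express $\mathcal{D}_1 I_{jk}$ as $I_{j+1,k}$ plus correction terms involving the Maurer--Cartan invariants, which for the projective action themselves contain the fifth-order invariants $I_{41},I_{32},I_{23},I_{14}$ (through $\sigma$, $\tau$, $\kappa$, $\eta$). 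Showing that $\sigma$ and $\tau$ can be solved for in terms of $\phi$, $\psi$ and their derivatives is the main computational content of the paper's argument: it requires forming $\Delta$-polynomials of the Maurer--Cartan syzygies of Theorem~\ref{mcsyz:th} and reducing through several rounds ($P_4$, $P_5$, then $P$ and $Q$) before relations linear in $\sigma$, $\tau$ emerge. So the step you call routine is the hard one, and an appeal to a ``standard reconstruction/symbol argument'' cannot replace it.

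Second, for the reduction from two generators to one --- which you correctly flag as delicate but propose to attack by an unspecified overdetermined system through order six --- you miss the structural fact that makes it work: the two surviving fourth-order generators are, up to constant multiples, precisely the commutator invariants, $[\mathcal{D}_2,\mathcal{D}_1]=\phi\,\mathcal{D}_1+\psi\,\mathcal{D}_2$ with $\phi=-\tfrac{1}{3}I_{04}$ and $\psi=\tfrac{1}{3}I_{40}$. Applying this commutation identity to $\phi$ itself (the trick of Theorem~\ref{ci:th}) yields $\psi$ as the rational expression \eqref{psi:eq} in $\phi$ and its derivatives whenever $\mathcal{D}_2\phi\neq 0$, and symmetrically \eqref{phi:eq} when $\mathcal{D}_1\psi\neq 0$; this one-line identity is the entire content of the two-to-one step, and the nondegeneracy condition \eqref{p1p20:eq} it requires is exactly what ``generic'' means in the statement. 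Without identifying this, your proposed elimination is not shown to \emph{determine} the second invariant rather than merely constrain it --- the very worry you raise yourself --- so the argument as written does not close.
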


The proofs follow the methods developed in \cite{Osurf}.
They are based on  \cite{FOmcII}, where moving frames were
 introduced as equivariant maps from the manifold to the group.
 A recent survey of the many developments and applications
this approach has entailed can be found in \cite{Ochina}.
Further extensions are in \cite{hubert07,hubert07b,hubert08a,hubert08b,Ogdi}.

A moving frame induces an invariantization process that maps dif\/ferential functions and dif\/ferential operators to dif\/ferential invariants and (non-commuting) invariant dif\/ferential operators.
Normalized  dif\/ferential invariants are
the invariantizations of the standard jet coordinates
and  are shown to generate dif\/ferential invariants at each order:
any dif\/ferential invariant can be written  as a function of
 the normalized invariants. This rewriting is actually a trivial replacement.

The key to the explicit, f\/inite description of dif\/ferential invariants
of any order lies in the \emph{recurrence formulae} that
explicitly relate the dif\/ferentiated and normalized dif\/ferential
invariants. Those formulae show that any dif\/ferential invariant
 can be written in terms of a f\/inite set of normalized dif\/ferential
invariants and their invariant derivatives.
Combined with the replacement rule, the formulae make
 the rewriting process  ef\/fective.
 Remarkably, these fundamental relations can be constructed using only the (prolonged) inf\/initesimal generators of the group action and the moving frame normalization equations.    One does \textit{not} need to know the explicit formulas for either the group action, or the moving frame, or even the dif\/ferential invariants and invariant dif\/ferential operators, in order to completely characterize
generating sets of dif\/ferential invariants and their syzygies.
Moreover 
the syzygies and recurrence relations are given by rational functions
and are thus amenable to algebraic algorithms and symbolic software
\cite{ncdiffalg,hubert05,aida, hubert07b} that we have used for this paper.

\section[Moving frames and differential invariants]{Moving frames and dif\/ferential invariants}

In this section we review the
construction of dif\/ferential invariants and invariant derivations
proposed in \cite{FOmcII}; see also \cite{hubert07b,hubert08a,Ogdi,Osurf}.
Let $G$ be an $r$-dimensional Lie group that acts (locally) on an $m$-dimensional manifold $M$.  We are interested in the
  action of $G$ on $p$-dimensional submanifolds  $N \subset M$ which, in local coordinates, we identify with the graphs of functions \mbox{$u = f(x)$}.  For each positive integer $n$, let $G^{(n)}$ denote the prolonged group action on the associated $n$-th order submanifold jet space ${\rm J}^n = {\rm J}^n(M,p)$, def\/ined as the set of equivalence classes of $p$-dimensional submanifolds of $M$ under the equivalence relation of $n$-th order contact.  Local coordinates
on ${\rm J}^n$ are denoted $z^{(n)} = (x,u^{(n)}) = (\ \dots \  x^i \ \dots \  u^\alpha _J \ \dots \ )$, with $u^\alpha _J$  representing the partial derivatives of
the dependent variables $u = (u^1,\dots,u^q)$ with respect to the independent variables $x = (x^1,\dots,x^p)$, where $p+q = m$, \cite{E}.

Assuming that the prolonged action is free\footnote{A theorem of Ovsiannikov, \cite{Ov},
slightly corrected in \cite{Osinmf}, guarantees local freeness of the prolonged action at
suf\/f\/iciently high order, provided $G$ acts locally ef\/fectively on subsets of $M$.  This is only a technical
restriction; for example, all analytic actions can be made ef\/fective by dividing by the global isotropy
subgroup.  Although all known examples of prolonged ef\/fective group actions are, in fact, free on an open subset
of a suf\/f\/iciently high order jet space, there is, frustratingly, as yet no general proof, nor known
counterexample, to this result.} on an open subset of
${\rm J}^n$, then one can construct a (locally def\/ined) \textit{moving frame}, which, according to
\cite{FOmcII}, is an equivariant map $\rho \colon V^n \to G$ def\/ined on an open subset $V^n \subset {\rm J}^n$.  Equivariance can be with respect
to either the right or left multiplication action of $G$ on itself.  All classical moving
frames, e.g., those appearing in
\cite{Cartanrm, Greenc, Griffithsmf, Gug, ivey03, Jensen}, can be regarded as left equivariant maps,
but the right equivariant versions may be easier to compute, and will be the version used here.  Of course, any right moving frame
can be converted to a left moving frame by composition with the inversion map
$g\mapsto g^{-1}$.

In practice, one constructs a moving frame by the process of normalization, relying on
the choice of a local \textit{cross-section}  $K^n \subset {\rm J}^n$ to the prolonged group orbits, meaning a submanifold of the complementary dimension that intersects each orbit transversally.  A general cross-section is prescribed implicitly by setting $r = \dim G$ dif\/ferential functions $Z = (Z_1,\dots,Z_r)$ to constants:
\begin{gather}\label{Z:eq}
Z_1(x,u^{(n)}) = c_1, \quad \dots, \quad  Z_r(x,u^{(n)}) = c_r.\end{gather}
Usually -- but not always, \cite{Mansfield,Osurf} -- the functions are selected from the jet space coordinates $x^i$, $u^\alpha _J$, resulting in a \textit{coordinate cross-section}.
The corresponding value of
the right moving frame at a jet $z^{(n)} \in {\rm J}^n$ is the unique group element $g = \rho^{(n)}(z^{(n)})\in G$ that maps it to
the cross-section:
\begin{equation}\label{mfn:eq}
\rho^{(n)}(z^{(n)})\cdot z^{(n)} = g^{(n)} \cdot z^{(n)} \in K^n.\end{equation}
  The moving frame $\rho^{(n)}$ clearly depends on the
choice of cross-section, which is usually designed so as to simplify
the required computations as much as possible.

Once the cross-section has been f\/ixed, the induced moving frame engenders an invariantization process,
 that ef\/fectively maps functions to invariants, dif\/ferential forms to invariant
dif\/ferential forms, and so on, \cite{FOmcII,Ochina}.  Geometrically, the \textit{invariantization} of any object
is def\/ined as the unique invariant object that coincides with its progenitor when restricted to the
cross-section.
In the special case of functions,
invariantization is actually entirely
def\/ined by the cross-section, and therefore
doesn't require the action to be (locally) free.
It is a projection from the ring of dif\/ferential
functions to the ring of dif\/ferential invariants,
the latter being isomorphic
to the ring of smooth functions on the cross-section  \cite{hubert07b}.

Pragmatically, the invariantization of a dif\/ferential function  is constructed by f\/irst writing out how it is transformed by the prolonged group action: $F(z^{(n)}) \mapsto F(g^{(n)} \cdot z^{(n)})$. One then
replaces  all the group parameters  by their \textit{right} moving frame formulae $g = \rho^{(n)}(z^{(n)})$, resulting in the
dif\/ferential invariant
\begin{gather}\label{iota:eq}
\iota\big[F(z^{(n)})\big] = F\big(\rho^{(n)}(z^{(n)}) \cdot z^{(n)}\big) .
\end{gather}
Dif\/ferential forms and dif\/ferential operators are handled in an analogous fashion~-- see
\cite{FOmcII,KOivb} for complete details.
Alternatively, the algebraic construction for the invariantization
of functions in \cite{hubert07b}
works with  the knowledge of the cross-section only,
i.e. without the explicit formulae for the moving frame,
and applies to  non-free actions as well.

In particular, the \textit{normalized differential invariants} induced by the moving frame
are obtained by invariantization of the basic  jet coordinates:
\begin{gather}\label{HI:eq}
H^i = \iota(x^i) ,\qquad I^\alpha_J= \iota(u^\alpha _J) ,
\end{gather}
which we collectively denote by $(H,I^{(n)}) = (\ \ldots \ H^i \ \ldots \ I^\alpha_J \ \ldots \ )$ for $\#J \leq n$.
In the case of a~coordinate cross-section,
these naturally split into two classes:  Those corresponding to the
cross-section functions $Z_\kappa$ are constant,
and known as the \textit{phantom differential
invariants}. The remainder, known as the \textit{basic differential invariants}, form a complete system of functionally independent dif\/ferential invariants.

Once the normalized dif\/ferential invariants  are known, the invariantization process \eqref{iota:eq} is implemented  by simply
replacing each jet coordinate by the corresponding normalized dif\/ferential
invariant \eqref{HI:eq}, so that
\begin{gather}\label{iotaF:eq}
\iota \big[F(x,u^{(n)})\big] = \iota\big[F(\ \dots \  x^i \ \dots \  u^\alpha _J \ \dots \ )\big]=
F(\ \dots \  H^i \ \dots \  I^\alpha_J \ \dots \ ) = F(H,I^{(n)}).\end{gather}
In particular, a dif\/ferential invariant is not af\/fected by invariantization, leading to the very useful \textit{Replacement Theorem}:
\begin{gather}\label{iotaI:eq}
J(x,u^{(n)}) = J(H,I^{(n)}) \quad \mbox{whenever $J$ is a dif\/ferential invariant.}\end{gather}
This permits one to straightforwardly rewrite any known dif\/ferential invariant
in terms the normalized invariants, and thereby establishes their completeness.

A contact-invariant coframe is obtained by taking the horizontal part
(i.e., deleting any contact forms) of the
invariantization of the basic horizontal one-forms:
\begin{gather}\label{omegai:eq}
\omega ^i \equiv  \iota(dx^i) \qquad  \mbox{modulo contact forms,}\qquad i=1,\dots,p,
\end{gather}
Invariant dif\/ferential
operators ${\mathcal D}_1,\dots,{\mathcal D}_p$  can  then be def\/ined as the associated dual
dif\/ferential operators, def\/ined so that
\[
dF  \equiv \sum_{i=1}^p ({\mathcal D}_i F) \, \omega ^i  \qquad  \mbox{modulo contact forms,}
\]
for any dif\/ferential function $F$.
Details can be found in \cite{FOmcII,KOivb}.
The invariant dif\/ferential operators  do not commute in general, but are subject to the commutation formulae
\begin{gather}\label{commutator:eq}
[{\mathcal D}_j,{\mathcal D}_k] = \sum_{i=1}^p Y^i_{jk} \,{\mathcal D}_i ,
\end{gather}
where the coef\/f\/icients $Y^i_{jk} = - Y^i_{kj}$ are certain dif\/ferential invariants
known as the \textit{commutator invariants}.

\section{Recurrence and syzygies}

In general, invariantization and dif\/ferentiation do not commute.  By a \textit{recurrence relation}, we mean an equation expressing an invariantly dif\/ferentiated invariant in terms of the basic dif\/ferential invariants.  Remarkably, the recurrence relations can be deduced knowing only the (prolonged) inf\/initesimal generators of the group action and the choice of cross-section.

Let ${\bf v}_1,\dots,{\bf v}_r$ be a basis for the inf\/initesimal generators of our transformation group. We prolong each inf\/initesimal generator to
${\rm J}^n$, resulting in the vector f\/ields
\begin{gather}\label{prv:eq}
{\bf v}^{(n)}_\kappa  =  \sum_{i=1}^p \xi ^i_\kappa (x,u) \frac{\partial}{\partial x^i}  +
\sum_{\alpha=1}^q  \sum_{j = \# J = 0}^n
\varphi ^\alpha_{J,\kappa } (x,u^{(j)}) \frac{\partial}{\partial u^\alpha _J },
\qquad \kappa=1,\dots,r,
\end{gather}
on ${\rm J}^n$. The coef\/f\/icients
$
\varphi ^\alpha _{J,\kappa } = {\bf v}^{(n)}_\kappa(u^\alpha _J)
$
are given by the prolongation formula, \cite{O,E}:
\begin{gather}\label{phiaJ:eq}
\varphi ^\alpha_{J,\kappa}
= D_J \left(\varphi ^\alpha_\kappa - \sum_{i = 1}^p \xi ^i_\kappa \,u^\alpha_i\right)
 + \sum_{i=1}^p \xi^i_\kappa u^\alpha_{J,i}
,
\end{gather}
where $D_1,\dots,D_p$ are the usual (commuting) total derivative operators, and $D_J = D_{j_1}  \cdots   D_{j_k}$ the corresponding iterated total derivative.

Given  a collection $F = (F_1,\dots,F_k)$ of dif\/ferential functions, let
\begin{gather}\label{vF:eq}
{\bf v}(F) = \big({\bf v}^{(n)}_\kappa(F_j)\big)\end{gather}
denote the $r \times k$ \textit{generalized Lie matrix} obtained by applying the prolonged inf\/initesimal generators to the dif\/ferential functions.  In particular, $L^{(n)}(x,u^{(n)}) = {\bf v}(x,u^{(n)})$ is the classical Lie matrix of order $n$ whose entries are the inf\/initesimal generator coef\/f\/icients $\xi ^i_\kappa$, $\varphi ^\alpha_{J,\kappa }$,~\cite{E,Ogdi}. The rank of the classical Lie matrix $L^{(n)}(x,u^{(n)})$ equals the dimension of the prolonged group orbit passing through the point $(x,u^{(n)}) \in {\rm J}^n$.  We set
\begin{gather}\label{rn:eq}
r_n = \max \big\{{\rm rank}\, L^{(n)}(x,u^{(n)})\,|\,(x,u^{(n)}) \in {\rm J}^n\big\}
\end{gather}
to be the maximal prolonged orbit dimension. Clearly, $r_0 \leq r_1 \leq r_2 \leq \cdots   \leq r = \dim G$, and $r_n = r$ if and only if the action is locally free on an open subset of ${\rm J}^n$.   Assuming $G$ acts locally ef\/fectively on subsets, \cite{Osinmf}, this holds for $n$ suf\/f\/iciently large. We def\/ine the \textit{stabilization order} $s$ to be the minimal $n$ such that $r_n = r$.  Locally, the number of functionally independent dif\/ferential invariants of order $\leq n$ equals $\dim {\rm J}^n - r_n$.

The fundamental moving frame recurrence formulae were f\/irst established in \cite{FOmcII} and written as follows; see also \cite{Ogdi} for additional details.

\begin{theorem}\label{recurrence:th} The \textit{recurrence formulae} for the normalized differential invariants have the form
\begin{gather}\label{DHI:eq}
{\mathcal D}_iH^j = \delta ^j_i + \sum_{\kappa=1}^r R_i^\kappa  \, \iota(\xi ^j_\kappa ) ,\qquad
{\mathcal D}_iI^\alpha _J = I^\alpha _{Ji} + \sum_{\kappa=1}^r R_i^\kappa \, \iota(\varphi ^\alpha _{J,\kappa }),
\end{gather}
where $\delta ^j_i$ is the usual Kronecker delta, and $R_i^\kappa$ are certain differential invariants.
\end{theorem}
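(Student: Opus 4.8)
The plan is to derive both families of relations as specializations of a single \emph{universal recurrence formula} measuring the failure of invariantization to commute with exterior differentiation, and then to extract horizontal components.

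First I would establish that, for every differential function $F$ on ${\rm J}^n$,
\[
d\,\iota(F) \;=\; \iota(dF) \;+\; \sum_{\kappa=1}^r \iota\big({\bf v}^{(n)}_\kappa(F)\big)\,\gamma^\kappa,
\]
where $\gamma^1,\dots,\gamma^r$ denote the invariantizations of the Maurer--Cartan forms on $G$ associated to the basis ${\bf v}_1,\dots,{\bf v}_r$, pulled back through the moving frame $\rho^{(n)}$; because $\rho^{(n)}$ is equivariant these are genuine invariant one-forms on jet space, depending only on the chosen cross-section. This identity is obtained by differentiating the defining relation $\iota(F)(z^{(n)}) = F\big(\rho^{(n)}(z^{(n)})\cdot z^{(n)}\big)$: varying the jet argument $z^{(n)}$ contributes the term $\iota(dF)$, while varying the group element $\rho^{(n)}(z^{(n)})$ in orbit directions contributes, by the chain rule and the definition of the prolonged infinitesimal generators in \eqref{prv:eq}--\eqref{phiaJ:eq}, the displayed sum of $\iota({\bf v}^{(n)}_\kappa(F))$ against the Maurer--Cartan components of $\rho^*$.

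Next I would split forms into horizontal and contact parts. Writing $\gamma^\kappa \equiv \sum_{i=1}^p R_i^\kappa\,\omega^i$ modulo contact forms defines the coefficients $R_i^\kappa$, and since the $\gamma^\kappa$ and the contact-invariant coframe $\omega^1,\dots,\omega^p$ are invariant, each $R_i^\kappa$ is a differential invariant. Specializing $F=x^j$, and using ${\bf v}^{(n)}_\kappa(x^j)=\xi^j_\kappa$ together with $\iota(dx^j)\equiv\omega^j$ modulo contact forms, the universal formula reads $dH^j\equiv\omega^j+\sum_\kappa \iota(\xi^j_\kappa)\,\gamma^\kappa$; equating the coefficients of $\omega^i$ and recalling $dF\equiv\sum_i({\mathcal D}_iF)\,\omega^i$ gives the first relation ${\mathcal D}_iH^j=\delta^j_i+\sum_\kappa R_i^\kappa\,\iota(\xi^j_\kappa)$. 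Specializing $F=u^\alpha_J$, and using ${\bf v}^{(n)}_\kappa(u^\alpha_J)=\varphi^\alpha_{J,\kappa}$ together with $\iota(du^\alpha_J)\equiv\sum_i I^\alpha_{Ji}\,\omega^i$ modulo contact forms, the same computation gives ${\mathcal D}_iI^\alpha_J=I^\alpha_{Ji}+\sum_\kappa R_i^\kappa\,\iota(\varphi^\alpha_{J,\kappa})$, which is the second relation of the theorem.

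To make these formulae effective one also pins down the $R_i^\kappa$: applying the universal formula to each cross-section function $Z_\kappa$ and using that $\iota(Z_\kappa)=c_\kappa$ is constant, so $d\,\iota(Z_\kappa)=0$, produces for each $i$ the linear system $0=\iota({\mathcal D}_iZ_\kappa)+\sum_{\lambda=1}^r \iota\big({\bf v}^{(n)}_\lambda(Z_\kappa)\big)\,R_i^\lambda$, whose coefficient matrix is the invariantized Lie matrix attached to the cross-section and is invertible precisely because $K^n$ meets the prolonged orbits transversally; hence the $R_i^\lambda$ are uniquely recovered as rational functions of the normalized invariants. The one genuinely delicate step is the justification of the universal formula — correctly isolating the Maurer--Cartan contribution from differentiating the group-valued map $\rho^{(n)}$ and verifying that the one-forms $\gamma^\kappa$ are invariant and depend only on the cross-section; granting that, the two stated identities are immediate specializations, and the solvability of the phantom system for the $R_i^\kappa$ is merely the transversality built into the construction.
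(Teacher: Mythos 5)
Your argument is correct and follows essentially the same route as the proof the paper relies on (from Fels--Olver \cite{FOmcII}): the universal identity $d\,\iota(F)=\iota(dF)+\sum_\kappa \iota({\bf v}^{(n)}_\kappa(F))\,\gamma^\kappa$ with $\gamma^\kappa=\rho^*\mu^\kappa$, followed by taking horizontal components and specializing $F$ to $x^j$ and $u^\alpha_J$, which is exactly the mechanism the paper encodes in \eqref{mfu:eq} and \eqref{urf:eq}. Your determination of the $R_i^\kappa$ from the phantom/cross-section equations likewise reproduces the paper's formula \eqref{MCm:eq}, so nothing essential is missing.
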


The recurrence formulae \eqref{DHI:eq} imply the following commutator syzygies among the normalized dif\/ferential invariants:
\begin{gather}\label{comsyz:eq}
{\mathcal D}_iI^\alpha _{Jj} - {\mathcal D}_jI^\alpha _{Ji} = \sum_{\kappa=1}^r \big[R_i^\kappa \, \iota(\varphi ^\alpha _{Jj,\kappa }) - R_j^\kappa \, \iota(\varphi ^\alpha _{Ji,\kappa })\big],\end{gather}
for all $1 \leq i,j \leq p$ and all multi-indices $J$.
We can show that a subset of these relationships~\eqref{DHI:eq},~\eqref{comsyz:eq} form a
complete set of syzygies, \cite{hubert08a}. By formally manipulating those syzygies, performing dif\/ferential
elimination \cite{diffalg,hubert03d,ncdiffalg,hubert05},
we are able to obtain expressions of
some of the dif\/ferential invariants in terms of the invariant derivatives
of others. This is the strategy for the main results of this paper.

In the case of coordinate cross-section, if we single out the recurrence formulae for the constant \textit{phantom differential invariants} prescribed by the cross-section, the left hand sides are all zero, and hence we obtain a linear algebraic system that can be uniquely solved for the invariants~$R_i^\kappa$.
Substituting the resulting formulae back into the recurrence formulae for the remaining, non-constant basic dif\/ferential invariants leads to a complete system of relations among the normalized dif\/ferential invariants \cite{FOmcII,Ogdi}.

More generally, if we think of the $R_i^\kappa$ as the entries of a $p\times r$ matrix
\begin{gather}\label{mcm:eq}
R = (R_i^\kappa ),
\end{gather}
then they are given explicitly by
\begin{gather}\label{MCm:eq}
R = -\iota\big[D(Z)\,{\bf v}(Z)^{-1}\big],\end{gather}
where $Z = (Z_1,\dots,Z_r)$ are the cross-section functions \eqref{Z:eq}, while
\begin{gather}\label{DZ:eq}
D(Z) = (D_i Z_j)\end{gather}
is the $p \times r$ matrix of their total derivatives.
The recurrence formulae are then covered by the matricial equation \cite{hubert08a}
\begin{gather}\label{urf:eq}
{\mathcal D}(\iota(F)) = \iota(D(F)) + R\>\iota({\bf v}(F)),\end{gather}
for any set of dif\/ferential functions $F = (F_1,\dots,F_k)$.
The left hand side denotes the $p \times k$ matrix
\begin{gather}\label{CDF:eq}
{\mathcal D}(\iota(F)) = ({\mathcal D}_i(\iota(F_j)))
\end{gather}
obtained by invariant dif\/ferentiation.

The invariants  $R_i^\kappa$  actually arise  in the proof of \eqref{DHI:eq}
 as the coef\/f\/icients of the horizontal parts of the pull-back of the  Maurer--Cartan forms via the moving frame, \cite{FOmcII}.  Explicitly, if $\mu^1,\dots,\mu^r$ are a basis for the Maurer--Cartan forms on $G$ dual to the Lie algebra basis ${\bf v}_1,\dots,{\bf v}_r$, then the horizontal part of their pull-back by the moving frame can be expressed in terms of the contact-invariant coframe \eqref{omegai:eq}:
\begin{gather}\label{mfu:eq}
  \gamma^\kappa= \rho^* \mu^\kappa  \equiv \sum_{i=1}^p R_i^\kappa \,\omega ^i  \qquad \mbox{modulo contact forms.}
  \end{gather}
We shall therefore refer to $R_i^\kappa$ as the \textit{Maurer--Cartan invariants}, while $R$ in \eqref{mcm:eq} will be called the \textit{Maurer--Cartan matrix}.
  In the case of curves,  when $G \subset {\rm GL}(N)$ is a matrix Lie group,
the Maurer--Cartan matrix $R =  {\mathcal D} \rho^{(n)}(x,u^{(n)}) \cdot \rho^{(n)}(x,u^{(n)})^{-1}$ can be identif\/ied with the
 Frenet--Serret matrix, \cite{Gug, MBp}, with ${\mathcal D}$ the invariant arc-length derivative.

The identif\/ication \eqref{mfu:eq} of the Maurer--Cartan invariants as
the coef\/f\/icients of the (horizontal parts of) the pulled-back
Maurer--Cartan forms can be used to deduce their syzygies,
\cite{hubert08b}.  The Maurer--Cartan forms on $G$ satisfy the usual Lie
group structure equations
\begin{gather}\label{mcseq:eq}
d \mu^c = - \sum_{a<b} C_{ab}^c \,\mu^a \wedge \mu^b,\qquad c=1,\dots,r,\end{gather}
where $C_{ab}^c$ are the structure constants of the Lie algebra relative to the basis ${\bf v}_1,\dots,{\bf v}_r$.
It follows that their pull-backs \eqref{mfu:eq} satisfy the same equations:
\begin{gather}\label{hmcseq:eq}
d \gamma^c = - \sum_{a<b} C_{ab}^c\, \gamma^a \wedge \gamma^b,\qquad c=1,\dots,r.
\end{gather}
The purely horizontal components of these identities provide the following syzygies among the Maurer--Cartan invariants, \cite{hubert08b}:

\begin{theorem}\label{mcsyz:th}
 The Maurer--Cartan invariants
  satisfy the following identities:
\begin{gather}\label{MCsyz:eq}
 {\mathcal D}_j(R^i_c) - {\mathcal D}_i(R^j_c)
  + \sum_{1\leq a<b\leq r}
      C_{ab}^c\, (R^i_a R^j_b- R^j_aR^i_b)
    + \sum_{k=1}^p Y^i_{jk}  \, R^k_c  = 0, \end{gather}
 for
$1\leq c\leq r$, $1\leq i<j\leq p$,  and where
$Y^i_{jk} $ are the commutator invariants \eqref{commutator:eq}.
\end{theorem}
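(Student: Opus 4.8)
The plan is to extract the purely horizontal components of the pulled-back structure equations \eqref{hmcseq:eq}, using the identification \eqref{mfu:eq} of the Maurer--Cartan invariants. Concretely, I would substitute $\gamma^c \equiv \sum_{i=1}^p R_i^c\,\omega^i$ (modulo contact forms) into both sides of \eqref{hmcseq:eq} and compare coefficients of the horizontal two-forms $\omega^i\wedge\omega^j$. On the right-hand side this is immediate from bilinearity of the wedge product:
\[
-\sum_{a<b} C_{ab}^c\,\gamma^a\wedge\gamma^b \equiv -\sum_{a<b} C_{ab}^c \sum_{i,j=1}^p R_i^a R_j^b\,\omega^i\wedge\omega^j \qquad \mbox{modulo contact forms},
\]
so that the coefficient of $\omega^i\wedge\omega^j$ for $i<j$ is $-\sum_{a<b}C_{ab}^c\,(R_i^aR_j^b - R_j^aR_i^b)$, which is the quadratic term of \eqref{MCsyz:eq}.

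For the left-hand side, expand $d\gamma^c \equiv \sum_i (dR_i^c)\wedge\omega^i + \sum_i R_i^c\,d\omega^i$ by the Leibniz rule. By the defining property of the invariant differential operators, $dR_i^c \equiv \sum_j (\mathcal{D}_j R_i^c)\,\omega^j$ modulo contact forms, and antisymmetrizing in $i,j$ this first sum contributes the $\mathcal{D}_jR_i^c - \mathcal{D}_iR_j^c$ term. The second sum requires the structure equations of the invariant coframe itself, namely $d\omega^i \equiv -\sum_{j<k} Y^i_{jk}\,\omega^j\wedge\omega^k$ modulo contact forms, with the same commutator invariants $Y^i_{jk}$ as in \eqref{commutator:eq}; this I would obtain by dualizing \eqref{commutator:eq}, since on the domain of the moving frame $\mathcal{D}_1,\dots,\mathcal{D}_p$ is the frame dual to $\omega^1,\dots,\omega^p$ and hence $d\omega^i(\mathcal{D}_j,\mathcal{D}_k) = -\omega^i([\mathcal{D}_j,\mathcal{D}_k]) = -Y^i_{jk}$. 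Feeding this in produces the commutator-invariant term of \eqref{MCsyz:eq}. Since the horizontal two-forms $\omega^i\wedge\omega^j$ with $i<j$ are pointwise linearly independent on the domain of the moving frame, equating coefficients on the two sides yields \eqref{MCsyz:eq} for every $c=1,\dots,r$ and $1\leq i<j\leq p$, after matching the index placement and sign conventions fixed in \eqref{mcseq:eq} and \eqref{commutator:eq}.

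The step that needs genuine care, rather than mechanical computation, is the bookkeeping of horizontal and contact degrees in the passage to horizontal components: one must verify that differentiating the congruence \eqref{mfu:eq} does not leak horizontal two-form contributions out of the contact part of $\gamma^c$. This is precisely where the fact that the contact ideal on the submanifold jet bundle is a differential ideal enters --- if $\theta$ is any contact form then $d\theta$ again lies in the contact ideal, and any wedge product containing a contact factor has no purely horizontal component --- so the purely horizontal part of $d\gamma^c$ is determined by the horizontal representative $\sum_i R_i^c\,\omega^i$ alone and the computation is self-consistent. Everything else is the Leibniz rule together with relabelling of indices; as with the recurrence formulae \eqref{DHI:eq}, no explicit description of the group action, the moving frame, or the differential invariants is needed, only the structure constants $C_{ab}^c$, the Maurer--Cartan invariants $R_i^c$, and the commutator invariants $Y^i_{jk}$.
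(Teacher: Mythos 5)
Your proposal is correct and follows essentially the same route as the paper: the text preceding Theorem~\ref{mcsyz:th} derives the identities precisely by pulling back the structure equations \eqref{mcseq:eq} to get \eqref{hmcseq:eq} and taking their purely horizontal components via the identification \eqref{mfu:eq}, delegating the detailed computation to \cite{hubert08b}. You have simply filled in that computation --- the Leibniz expansion of $d\gamma^c$, the dual structure equations $d\omega^i \equiv -\sum_{j<k} Y^i_{jk}\,\omega^j\wedge\omega^k$ obtained from \eqref{commutator:eq}, and the observation that the contact ideal being a differential ideal keeps the horizontal bookkeeping consistent --- which is exactly what the cited reference is meant to supply.
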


Finally, we note the recurrence formulas for the invariant dif\/ferential forms established in \cite{FOmcII} produce the explicit formulas for the commutator invariants:
\begin{gather}\label{Y:eq}
Y^i_{jk} = \sum_{\kappa=1}^r \sum_{j=1}^p
R_j^\kappa\,\iota (D_j\xi ^i_\kappa) - R^\kappa_k\,\iota (D_k\xi ^i_\kappa).
\end{gather}

\section[Generating differential invariants]{Generating dif\/ferential invariants}

A set of dif\/ferential invariants ${\mathfrak I} = \{I_1,\dots,I_k\}$ is called \textit{generating} if, locally, every dif\/ferential invariant can be expressed as a function of them and their iterated invariant derivatives ${\mathcal D}_J I_\nu$.  A key issue is to f\/ind a minimal set of generating invariants, which (except for curves) must be done on a case by case basis.  Before investigating the minimality question in the conformal and projective examples, let us state general results characterizing (usually non-minimal) generating systems.  These results are all consequences of the recurrence formulae, \eqref{DHI:eq} or \eqref{urf:eq}, that furthermore make the rewriting  constructive.

 Let
\begin{gather}\label{ninv:eq}
{\mathfrak J}^n =  \{ H^1, \ldots, H^p\}
  \>\cup\> \{ I^\alpha_J\, |\, \alpha =1,\dots,q, \#J \leq n\} \end{gather}
  denote the complete set of normalized dif\/ferential invariants of order $\leq n$.  In particular, assuming we choose a cross-section that projects to a cross-section on $M$ (e.g., a minimal order cross-section) then ${\mathfrak J}^0 = \{H^1, \ldots, H^p,I^1,\ldots, I^q\}$ are the ordinary invariants for the action on~$M$.  In particular, if, as in the examples treated here, the action is transitive on $M$, the normalized order~$0$ invariants are all constant, and hence are superf\/luous for the following generating systems.

\begin{theorem}\label{cngen:th}
If the moving frame has order $n$, then
the set of normalized differential invariants
${\mathfrak J}^{n+1}$ of order $n+1$ or less forms a
generating set.
\end{theorem}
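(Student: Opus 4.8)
The plan is to exploit the recurrence formulae \eqref{DHI:eq} to show that invariantly differentiating the order-$(n+1)$ normalized invariants produces every higher-order normalized invariant, up to lower-order corrections that are themselves functions of invariants already in the generating set. The key structural fact is this: if the moving frame has order $n$, then every Maurer--Cartan invariant $R_i^\kappa$ is, by \eqref{MCm:eq}, a function of the normalized invariants of order at most $n$ (indeed at most $n$, since the cross-section functions $Z_\kappa$ and their total derivatives $D_iZ_\kappa$ involve jet coordinates of order at most $n+1$, but the phantom recurrence relations let one solve for $R_i^\kappa$ purely in terms of order-$\le n$ invariants). Consequently, for any multi-index $J$ with $\#J = k \geq n+1$, the second recurrence formula reads ${\mathcal D}_i I^\alpha_J = I^\alpha_{Ji} + \sum_\kappa R_i^\kappa\,\iota(\varphi^\alpha_{J,\kappa})$, where $\iota(\varphi^\alpha_{J,\kappa})$ is a polynomial in normalized invariants of order $\leq k$ (by the prolongation formula \eqref{phiaJ:eq} and the Replacement Theorem \eqref{iotaI:eq}), and $R_i^\kappa$ has order $\leq n < k$.

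First I would set up an induction on the order $k \geq n+1$, with the inductive hypothesis that every normalized invariant of order $\leq k$ lies in the differential algebra generated by ${\mathfrak J}^{n+1}$ under the operators ${\mathcal D}_1,\dots,{\mathcal D}_p$. The base case $k = n+1$ is immediate. For the inductive step, given any $I^\alpha_{Ji}$ of order $k+1$ (so $\#J = k$), I would rearrange the recurrence formula to write
\[
I^\alpha_{Ji} = {\mathcal D}_i I^\alpha_J - \sum_{\kappa=1}^r R_i^\kappa\,\iota(\varphi^\alpha_{J,\kappa}).
\]
By the inductive hypothesis, $I^\alpha_J$ (order $k$) is expressible through ${\mathfrak J}^{n+1}$ and its invariant derivatives, hence so is ${\mathcal D}_i I^\alpha_J$; each $R_i^\kappa$ has order $\leq n$ and is likewise covered; and $\iota(\varphi^\alpha_{J,\kappa})$ has order $\leq k$, so it too is covered. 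Therefore $I^\alpha_{Ji}$ is expressible in the required form. Since every normalized invariant of order $k+1$ is of the form $I^\alpha_{Ji}$ for some such $J,i$ (together with the order-$\le k$ ones, already handled), the induction closes. Finally, by the Replacement Theorem, an arbitrary differential invariant of order $N$ equals a function of ${\mathfrak J}^N$, and hence of ${\mathfrak J}^{n+1}$ and its invariant derivatives.

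The main point requiring care—what I would regard as the genuine content rather than bookkeeping—is the claim that the Maurer--Cartan invariants $R_i^\kappa$ have order at most $n$ when the moving frame has order $n$. This is where one invokes the phantom-invariant solvability: because the cross-section is of order $n$, the $r$ phantom recurrence relations (the order-$n$ instances of \eqref{DHI:eq} with zero left-hand side) form a linear system for the $R_i^\kappa$ whose coefficients $\iota(\xi^i_\kappa)$, $\iota(\varphi^\alpha_{J,\kappa})$ are functions of normalized invariants of order $\leq n$, and whose unique solution therefore also has order $\leq n$; this is the substance of the discussion following \eqref{DHI:eq} and of \eqref{MCm:eq}. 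Once this is in hand, the degree/order count in the inductive step is routine, since invariant differentiation ${\mathcal D}_i$ raises order by exactly one and the prolongation formula \eqref{phiaJ:eq} shows $\varphi^\alpha_{J,\kappa}$ has the same order as $u^\alpha_J$.
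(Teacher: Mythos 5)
Your proposal is correct and takes essentially the same route the paper relies on: the order-by-order induction driven by the recurrence formulae \eqref{DHI:eq}, rearranged as $I^\alpha_{Ji} = {\mathcal D}_i I^\alpha_J - \sum_\kappa R_i^\kappa\,\iota(\varphi^\alpha_{J,\kappa})$, with the Maurer--Cartan invariants obtained from the phantom relations \eqref{MCm:eq}. One inaccuracy worth correcting: the phantom system $0 = \iota(D_iZ_j) + \sum_\kappa R_i^\kappa\,\iota({\bf v}_\kappa(Z_j))$ has inhomogeneous terms $\iota(D_iZ_j)$ of order up to $n+1$, since total differentiation raises the order of the order-$n$ cross-section functions, so the $R_i^\kappa$ are in general functions of normalized invariants of order $\leq n+1$, not $\leq n$ as you assert --- indeed, were your claim true, the same induction would show that ${\mathfrak J}^{n}$ already generates, which is stronger than the theorem and false in general. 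Since your inductive step only needs the $R_i^\kappa$ to be covered by ${\mathfrak J}^{n+1}$, the correct bound of $n+1$ suffices and the argument goes through unchanged.
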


For cross-section of \textit{minimal order} there is an additional
important  set of invariants that is generating. This was proved for coordinate cross-sections in
\cite{Ogdi} and then generalized
in \cite{hubert08a}.  For each $k \geq 0$, let $r_k$ denote the maximal orbit dimension of the action of $G^{(k)}$ on ${\rm J}^k$.

\begin{theorem}\label{minorder:th}
Let $Z= (Z_1,\dots,Z_r)$ define
a \textit{minimal order cross-section} in the sense that for each $k = 0, 1, \ldots, s$, where $s$ is the stabilization order,
$Z_k=(Z_1, \ldots, Z_{r_k})$ defines a cross-section
for the action of $G^{(k)}$ on ${\rm J}^k$.
 Then ${\mathfrak J}^0 \> \cup \>\mathfrak{Z}$, where
\begin{gather}\label{iZ:eq}
\mathfrak{Z}=
\{\iota (D_i(Z_j)) \,|\, 1\leq i\leq p, \; 1\leq j\leq r \},\end{gather}
form a generating set of differential invariants.
\end{theorem}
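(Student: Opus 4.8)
The plan is to show that every differential invariant lies in the differential algebra $\mathcal{A}$ generated by $\mathfrak{J}^0\cup\mathfrak{Z}$ under the invariant derivations $\mathcal{D}_1,\dots,\mathcal{D}_p$. By the Replacement Theorem \eqref{iotaI:eq}, any differential invariant is a function of the normalized invariants $H^i,I^\alpha_J$, and since $H^i\in\mathfrak{J}^0$ it suffices to place each $I^\alpha_J$ in $\mathcal{A}$; moreover, as a minimal-order cross-section yields a moving frame of order $n=s$, Theorem~\ref{cngen:th} shows $\mathfrak{J}^{s+1}$ is already generating, so only finitely many $I^\alpha_J$ need be considered.

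I would record two facts first. In the prolongation formula \eqref{phiaJ:eq} the terms of order $\#J+1$ cancel, so $\varphi^\alpha_{J,\kappa}$, and hence $\iota(\varphi^\alpha_{J,\kappa})$, depends only on coordinates, respectively normalized invariants, of order $\le\#J$. And applying the universal recurrence \eqref{urf:eq} to $F=Z$ -- whose invariantization is the constant cross-section tuple, so $\mathcal{D}(\iota(Z))=0$ -- yields $\iota(D(Z))+R\,\iota({\bf v}(Z))=0$, i.e.\ \eqref{MCm:eq}, and the entries of the $p\times r$ matrix $\iota(D(Z))$ are by construction precisely the elements of $\mathfrak{Z}$. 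Thus $R=-\iota(D(Z))\,\iota({\bf v}(Z))^{-1}$ expresses the Maurer--Cartan invariants rationally in terms of $\mathfrak{Z}$ and the entries $\iota({\bf v}^{(\ell)}_\kappa(Z_j))$ of $\iota({\bf v}(Z))$.

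Granting that all $R^\kappa_i\in\mathcal{A}$, the remaining argument is an induction on $k$ that every $I^\alpha_J$ with $\#J\le k$ lies in $\mathcal{A}$: the base $k=0$ is $\mathfrak{J}^0\subset\mathcal{A}$, and the step rewrites \eqref{DHI:eq} for $\#J=k$ as
\[
I^\alpha_{Ji} \;=\; \mathcal{D}_i I^\alpha_J \;-\; \sum_{\kappa=1}^r R^\kappa_i\,\iota(\varphi^\alpha_{J,\kappa}),
\]
in which $\mathcal{D}_i I^\alpha_J\in\mathcal{A}$ by the inductive hypothesis and closure of $\mathcal{A}$ under $\mathcal{D}_i$, and $\iota(\varphi^\alpha_{J,\kappa})\in\mathcal{A}$ by the first fact together with the inductive hypothesis. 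So everything comes down to showing $R^\kappa_i\in\mathcal{A}$, and this is where the minimal-order hypothesis is indispensable: because the truncation $Z_{(\ell)}=(Z_1,\dots,Z_{r_\ell})$ is a cross-section for $G^{(\ell)}$ on ${\rm J}^\ell$ for each $\ell\le s$, every $Z_j$ with $j\le r_\ell$ has order $\le\ell$, so $\iota({\bf v}^{(\ell)}_\kappa(Z_j))$ is a function of normalized invariants of order $\le\ell$ only. Coupled with the stagewise construction of the moving frame -- which grades the group parameters and the invariants $R^\kappa_i$ by the order at which each parameter is normalized, compatibly with the grading of the columns of ${\bf v}(Z)$ by $\mathrm{ord}\,Z_j$ -- this gives the linear system $R\,\iota({\bf v}(Z))=-\iota(D(Z))$ enough triangular structure to be solved for $R$ block by block, the block fixed at stage $\ell$ drawing only on elements of $\mathfrak{Z}$ and on normalized invariants of order $\le\ell$. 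Feeding this solve back into the induction on $k$, so that each new batch of normalized invariants (from \eqref{DHI:eq}) and each new block of $R$ (from \eqref{MCm:eq}) are produced in tandem, completes the proof that $\mathfrak{J}^0\cup\mathfrak{Z}$ generates.

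The step I expect to be the main obstacle is exactly this interlocking and, specifically, showing the scheme is not circular: taken at face value, the recurrence \eqref{DHI:eq} for even a low-order invariant $I^\alpha_J$ involves the whole Maurer--Cartan matrix, whose entries are a priori of higher order than $I^\alpha_J$. The work is to turn the minimal-order hierarchy of cross-sections into a genuine compatible grading of $\{Z_j\}$, of the columns of ${\bf v}(Z)$, and of $\{R^\kappa_i\}$, and to verify that against this grading the block-triangular form of $\iota({\bf v}(Z))$ -- together with the order bound on $\iota(\varphi^\alpha_{J,\kappa})$ and, if needed, the Maurer--Cartan syzygies of Theorem~\ref{mcsyz:th} -- makes the two inductions advance in lockstep with no gap. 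The rest -- the Replacement reduction, the closure properties of $\mathcal{A}$, the order bookkeeping in \eqref{phiaJ:eq} -- is routine.
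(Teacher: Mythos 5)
The paper itself does not prove Theorem~\ref{minorder:th}~--- it is imported from \cite{Ogdi} and \cite{hubert08a}~--- so your attempt can only be measured against the standard argument in those references. Your skeleton is the right one: reduce via the Replacement Theorem to placing each $I^\alpha_J$ in the algebra $\mathcal{A}$, note that $\mathrm{ord}\,\varphi^\alpha_{J,\kappa}\le\#J$, obtain $R=-\iota(D(Z))\,\iota({\bf v}(Z))^{-1}$ by applying \eqref{urf:eq} to $F=Z$, and induct on order through \eqref{DHI:eq}. You have also correctly located the crux: the apparent circularity, since the entries of $\iota({\bf v}(Z))^{-1}$, hence the individual $R^\kappa_i$, involve normalized invariants of order up to $s$, which are not yet available at stage $k<s$ of the induction. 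But you leave that crux unresolved, and the mechanism you sketch~--- solving for $R$ ``block by block'' from a block-triangular $\iota({\bf v}(Z))$~--- does not work as stated. The columns of $\iota({\bf v}(Z))$ of order $\le k$ supply only $r_k$ equations for the $r$ unknowns $R^1_i,\dots,R^r_i$, an underdetermined system; the matrix is not block-triangular in any fixed Lie-algebra basis, because the relevant filtration by isotropy subalgebras varies from point to point of the cross-section; and the individual Maurer--Cartan invariants genuinely are invariants of order $s+1$ (see the explicit conformal matrix in the paper, whose entries $\kappa$, $\sigma$, $\tau$ involve $I_{31}$, $I_{22}$, $I_{13}$), so no scheme that produces the entries of $R$ themselves at stage $k<s$ can succeed.

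The missing idea is that one never needs the individual $R^\kappa_i$ at intermediate stages, only the combinations $\sum_\kappa R^\kappa_i\,\iota(\varphi^\alpha_{J,\kappa})$ occurring in \eqref{DHI:eq}, and these \emph{are} determined by the order-$\le k$ part of the phantom system. Precisely, minimality means $Z_{(k)}=(Z_1,\dots,Z_{r_k})$ is transverse to the $G^{(k)}$-orbits in ${\rm J}^k$, so the $r\times r_k$ matrix $\iota({\bf v}(Z_{(k)}))$ has rank $r_k$, which equals the rank of the full invariantized Lie matrix of order $k$; hence its column space coincides with that of the whole Lie matrix, and every column $\bigl(\iota(\varphi^\alpha_{J,\kappa})\bigr)_\kappa$ with $\#J\le k$ is a combination $\sum_{j\le r_k}c_j\,\bigl(\iota({\bf v}_\kappa(Z_j))\bigr)_\kappa$ whose coefficients $c_j$ are rational in normalized invariants of order $\le k$. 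Substituting and using the phantom relations $\sum_\kappa R^\kappa_i\,\iota({\bf v}_\kappa(Z_j))=-\iota(D_iZ_j)$ yields $\sum_\kappa R^\kappa_i\,\iota(\varphi^\alpha_{J,\kappa})=-\sum_{j\le r_k}c_j\,\iota(D_iZ_j)$, which lies in $\mathcal{A}$ by the inductive hypothesis together with $\mathfrak{Z}$. With this lemma inserted your induction closes; without it the proof has a genuine gap at exactly the point where the minimal-order hypothesis must be used.
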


Another interesting consequence of Theorem \ref{recurrence:th} observed
in \cite{hubert08b} is
that the Maurer--Cartan invariants
\begin{gather}\label{mcinv:eq}
{\mathfrak R}= \{ R^i_a \,|\, 1\leq i\leq p, \; 1\leq a\leq r\}\end{gather}
also form a generating set when the action is
transitive on $M$. More precisely:

\begin{theorem}\label{mcgen:th} The differential invariants ${\mathfrak J}^0\> \cup \>{\mathfrak R}$ form a generating set.
\end{theorem}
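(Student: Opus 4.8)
The plan is to prove directly that every normalized differential invariant lies in the set ${\mathcal G}$ of those differential invariants that can be written as smooth functions of the invariants ${\mathfrak J}^0\cup{\mathfrak R}$ together with their iterated invariant derivatives ${\mathcal D}_K R^i_a$. By the Replacement Theorem \eqref{iotaI:eq}, an arbitrary differential invariant is a function of the normalized invariants $(H,I^{(n)})$, so this suffices. The first thing to record is that ${\mathcal G}$ is automatically closed under each ${\mathcal D}_i$: differentiating a function of the generators and their iterated derivatives and using the Leibniz and chain rules produces, via ${\mathcal D}_i{\mathcal D}_K = {\mathcal D}_{iK}$, another function of the same data. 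No commutator invariants enter, because one never needs to reorder the operators.

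I would then induct on the order $n$ of the normalized invariants. For $n=0$ the invariants $H^1,\dots,H^p,I^1,\dots,I^q$ are exactly the elements of ${\mathfrak J}^0$, hence lie in ${\mathcal G}$; when $G$ acts transitively on $M$ these are all constant, which is why ${\mathfrak R}$ alone then suffices. For the inductive step, assume every normalized invariant of order $\le n$ belongs to ${\mathcal G}$, and let $I^\alpha_{Ji}$ with $\#J=n$ be an arbitrary normalized invariant of order $n+1$ (any multi-index of length $n+1$ splits this way). Solving the recurrence formula \eqref{DHI:eq} for the highest-order term gives
\[
I^\alpha_{Ji} = {\mathcal D}_i I^\alpha_J - \sum_{\kappa=1}^r R_i^\kappa\,\iota\big(\varphi^\alpha_{J,\kappa}\big).
\]
Here $I^\alpha_J$ has order $n$, so it lies in ${\mathcal G}$ by the inductive hypothesis and therefore so does ${\mathcal D}_i I^\alpha_J$ by the closure remark, and each $R_i^\kappa$ belongs to ${\mathfrak R}\subset{\mathcal G}$.

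The \emph{crux} is the term $\iota(\varphi^\alpha_{J,\kappa})$, since a priori $\varphi^\alpha_{J,\kappa}$ is a differential function of order $n+1$ and its invariantization could reintroduce order-$(n+1)$ normalized invariants, making the argument circular. This is avoided by the prolongation formula \eqref{phiaJ:eq}: writing $\varphi^\alpha_{J,\kappa} = D_J\big(\varphi^\alpha_\kappa - \sum_i \xi^i_\kappa u^\alpha_i\big) + \sum_i \xi^i_\kappa u^\alpha_{J,i}$, the unique contribution of jet order $n+1$ to $D_J\big(-\sum_i \xi^i_\kappa u^\alpha_i\big)$ is the term in which all $n$ total derivatives of $D_J$ act on the factor $u^\alpha_i$, namely $-\sum_i \xi^i_\kappa u^\alpha_{J,i}$, and this cancels the last sum; the remaining piece $D_J(\varphi^\alpha_\kappa)$ already has order $\le n$. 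Hence $\varphi^\alpha_{J,\kappa}$ has order $\le n$, so by \eqref{iotaF:eq} its invariantization $\iota(\varphi^\alpha_{J,\kappa}) = \varphi^\alpha_{J,\kappa}(H,I^{(n)})$ is a function of normalized invariants of order $\le n$, all of which lie in ${\mathcal G}$ by the inductive hypothesis. Therefore $I^\alpha_{Ji}\in{\mathcal G}$, which closes the induction and shows every differential invariant lies in ${\mathcal G}$. The only genuinely delicate point is this order count for $\varphi^\alpha_{J,\kappa}$; everything else is bookkeeping with the recurrence formulae and the Replacement Theorem, and I would isolate the order count as a small lemma for clarity.
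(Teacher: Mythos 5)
Your proof is correct and proceeds along exactly the route the paper indicates (the theorem is presented as a consequence of the recurrence formulae \eqref{DHI:eq}, with the detailed proof deferred to \cite{hubert08b}): an induction on order, solving the recurrence relation for $I^\alpha_{Ji}$, with the key observation that $\varphi^\alpha_{J,\kappa}$ has jet order at most $\#J$ --- a fact already built into the notation of \eqref{prv:eq}. The order count you isolate as the crux is indeed the only delicate point, and your verification of it via the prolongation formula \eqref{phiaJ:eq} is sound.
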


In \cite{Osurf}, the following device for generating the commutator invariants was introduced, and then applied to the dif\/ferential invariants of Euclidean and equi-af\/f\/ine surfaces.  We will employ the same trick here.

\begin{theorem}\label{ci:th} Let $I = (I_1,\dots,I_p)$ be a set of differential invariants such that ${\mathcal D}(I)$, cf.~\eqref{CDF:eq}, forms a nonsingular $p\times p$ matrix of differentiated invariants.  Then one can express the commutator invariants as rational functions of the invariant derivatives, of order $\leq 2$, of $I_1,\dots,I_p$.
\end{theorem}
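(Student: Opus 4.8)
The plan is to start from the commutator formula \eqref{commutator:eq} applied to the differential invariants $I_1,\dots,I_p$ themselves. For each pair $1\le j<k\le p$ we have
\[
[{\mathcal D}_j,{\mathcal D}_k]\,I_\ell = \sum_{i=1}^p Y^i_{jk}\,{\mathcal D}_i I_\ell, \qquad \ell = 1,\dots,p.
\]
The left-hand side, ${\mathcal D}_j{\mathcal D}_k I_\ell - {\mathcal D}_k{\mathcal D}_j I_\ell$, is a differential invariant built from invariant derivatives of the $I_\ell$ of order at most $2$. The right-hand side is linear in the unknowns $Y^1_{jk},\dots,Y^p_{jk}$, with coefficient matrix exactly ${\mathcal D}(I) = ({\mathcal D}_i I_\ell)$, which by hypothesis is a nonsingular $p\times p$ matrix. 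Hence, treating $\ell$ as the row index, we obtain a nonsingular linear system whose solution expresses each $Y^i_{jk}$ as a ratio of a determinant involving the second-order differentiated invariants ${\mathcal D}_j{\mathcal D}_k I_\ell$ (and the first-order ones ${\mathcal D}_i I_\ell$) by $\det {\mathcal D}(I)$.

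Concretely, I would write this as $Y_{jk} = {\mathcal D}(I)^{-1}\,C_{jk}$, where $Y_{jk} = (Y^1_{jk},\dots,Y^p_{jk})^T$ and $C_{jk} = \big({\mathcal D}_j{\mathcal D}_k I_\ell - {\mathcal D}_k{\mathcal D}_j I_\ell\big)_{\ell=1}^p$. Since the entries of ${\mathcal D}(I)$ are differential invariants and $\det{\mathcal D}(I)$ is a nonvanishing differential invariant (on the open set where nonsingularity holds), Cramer's rule gives each $Y^i_{jk}$ as a rational function, with denominator a power of $\det{\mathcal D}(I)$, of the invariants ${\mathcal D}_i I_\ell$ and ${\mathcal D}_j{\mathcal D}_k I_\ell$; these are invariant derivatives of $I_1,\dots,I_p$ of order $\le 2$, exactly as claimed. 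For the antisymmetric pairs this covers all commutator invariants, since $Y^i_{jk} = -Y^i_{kj}$ and $Y^i_{jj}=0$.

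There is really no serious obstacle here beyond bookkeeping: the only point requiring care is that the left-hand side $[{\mathcal D}_j,{\mathcal D}_k]I_\ell$ is indeed a differential invariant — which it is, being a combination of invariant derivatives of invariants — so that the whole identity lives in the ring of differential invariants and the resulting rational expressions for $Y^i_{jk}$ are genuinely invariant. One should also note the genericity caveat: the formulas are valid on the dense open subset where ${\mathcal D}(I)$ stays nonsingular, i.e.\ $\det{\mathcal D}(I)\neq 0$. This is precisely the device that will later be applied, in the conformal and projective cases, with $I$ taken to be suitable components among the normalized invariants whose invariant derivatives happen to form a nonsingular Jacobian-type matrix.
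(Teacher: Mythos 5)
Your proposal is correct and follows essentially the same argument as the paper: apply the commutator identity \eqref{commutator:eq} to the invariants $I_1,\dots,I_p$, regard the result as a linear system for the $Y^i_{jk}$ with coefficient matrix ${\mathcal D}(I)$, and solve by Cramer's rule. The additional remarks on invariance of the left-hand side and the genericity of the nonsingularity condition are sensible but not part of the paper's (very brief) proof.
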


\begin{proof}
In view of \eqref{commutator:eq}, we have
\begin{gather}\label{DijIk:eq}
{\mathcal D}_i {\mathcal D}_j I_l - {\mathcal D}_j {\mathcal D}_i I_l = \sum_{k=1}^p Y^i_{jk} \,{\mathcal D}_k I_l.\end{gather}
We regard \eqref{DijIk:eq} as a system of $p$ linear equations for the
commutator invariants $Y^i_{j1}, \ldots, Y^i_{jp}$.  Our assumption
implies that the coef\/f\/icient matrix is  nonsingular.  Solving the linear
system by, say, Cramer's rule, produces the formulae for the $Y^i_{jk}$.
\end{proof}

In particular, if $I$ is any single dif\/ferential invariant with suf\/f\/iciently many nontrivial invariant derivatives,
the dif\/ferential invariants in the proposition can be taken as invariant derivatives of~$I$.
Typically we choose $I$ of order at least $n$, the order of the moving frame, and $p-1$ of its f\/irst order invariant derivatives.
If $I$ is a basic invariant, nonsingularity of the matrix of dif\/ferentiated invariants is then a consequence of the recurrence formulae.
As a result, one is, in fact, able to generate all of the commutator invariants as combinations of derivatives of a \textit{single differential invariant\/}!

\section[Differential invariants of surfaces]{Dif\/ferential invariants of surfaces}

Let us specialize the preceding general constructions to the case of two-dimensional surfaces in three-dimensional space.  Let $G$ be a $r$-dimensional Lie group acting transitively and ef\/fectively on $M = {\mathbb R}^3$.
Let ${\rm J}^n = {\rm J}^n({\mathbb R}^3,2)$ denote the $n$-th order surface jet bundle, with the usual induced coordinates $z^{(n)} = (x,y,u,u_x,u_y,u_{xx}, \dots , u_{jk}, \dots )$ for $j+k \leq n$.

Let $n \geq s$, the stabilization order of $G$.  Given a cross-section $K^n \subset {\rm J}^n$, let $\rho\: \colon V^n \to G$ be the induced right moving frame def\/ined on a suitable open subset $V^n \subset {\rm J}^n$ containing $K^n $.
Invariantization of the basic jet coordinates results in the \textit{normalized differential invariants}
\begin{gather}\label{ndi:eq}
H_1 = \iota(x),\qquad H_2 = \iota(y),\qquad I_{jk} = \iota(u_{jk}),\qquad  j, k \geq 0.
\end{gather}
In view of our transitivity assumption, we will only consider cross-sections that normalize the order $0$ variables, $x = y = u = 0$, and so the order $0$ normalized invariants are trivial: $H_1 = H_2 = I_{00} = 0$.
We use
\begin{gather}\label{In:eq}
I^{(n)} = (0,I_{10},I_{01},I_{20},I_{11}, \dots  ,I_{0n}) = \iota(u^{(n)})\end{gather}
to denote all the normalized dif\/ferential invariants, both phantom and basic, of order $\leq n$ obtained by invariantizing the dependent variable $u$ and its derivatives.

In addition, the two invariant dif\/ferential operators
 ${\mathcal D}_1$, ${\mathcal D}_2$, are obtained
as the total derivations  dual
to  the contact-invariant coframe determined by the moving frame:
Specializing the general moving frame recurrence formulae in Theorem~\ref{recurrence:th}, we have:

\begin{theorem}\label{rf:th} The \textit{recurrence formulae} for the differentiated invariants are
\begin{gather}\label{rf:eq}
{\mathcal D}_1 I_{jk} = I_{j+1,k} + \sum_{\kappa=1}^r \varphi _\kappa ^{jk}(0,0,I^{(j+k)}) R^\kappa_1,\nonumber\\
{\mathcal D}_2 I_{jk} = I_{j,k+1} + \sum_{\kappa=1}^r \varphi _\kappa ^{jk}(0,0,I^{(j+k)}) R^\kappa_2 ,
\qquad j+k \geq 1,
\end{gather}
where $R^\kappa_i$ are the Maurer--Cartan invariants, which multiply the invariantizations of the coefficients of the prolonged infinitesimal generator
\begin{gather}\label{v:eq}
{\bf v}_{\kappa} = \xi_\kappa (x,y,u) \frac{\partial}{\partial x} + \eta_\kappa(x,y,u) \frac{\partial}{\partial y}  + \sum_{0 \leq j+k \leq n} \varphi _\kappa^{jk}(x,y,u^{(j+k)})\frac{\partial}{\partial u_{jk}},\end{gather}
which are given explicitly by the usual prolongation formula \eqref{phiaJ:eq}:
\begin{gather}\label{vnc:eq}
\varphi_\kappa ^{jk} = D_x^jD_y^k (\varphi _\kappa- \xi_\kappa\,  u_x - \eta _\kappa\, u_y) + \xi _\kappa\, u_{j+1,k} + \eta _\kappa\, u_{k,j+1}.\end{gather}
\end{theorem}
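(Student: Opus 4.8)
The plan is to derive Theorem~\ref{rf:th} as the direct specialization of the general recurrence formulae of Theorem~\ref{recurrence:th} to the geometric data $p = 2$, $q = 1$, $M = {\mathbb R}^3$, supplemented by one evaluation of the invariantized prolonged generator coefficients that uses the transitivity normalization. First I would fix the notational dictionary. With a single dependent variable the superscript $\alpha$ disappears; a multi-index $J$ of length $j + k$ is encoded as the pair $(j,k)$ recording the number of $x$- and $y$-differentiations, so that $u^\alpha_J$ becomes $u_{jk}$, the normalized invariant $I^\alpha_J$ becomes $I_{jk} = \iota(u_{jk})$, and appending the index $1$ to $J$ sends $(j,k)$ to $(j+1,k)$ while appending the index $2$ sends it to $(j,k+1)$. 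Writing $x^1 = x$, $x^2 = y$ and $\xi^1_\kappa = \xi_\kappa$, $\xi^2_\kappa = \eta_\kappa$, the characteristic $\varphi^\alpha_\kappa - \sum_i \xi^i_\kappa u^\alpha_i$ in \eqref{phiaJ:eq} becomes $\varphi_\kappa - \xi_\kappa u_x - \eta_\kappa u_y$, the iterated total derivative $D_J$ becomes $D_x^j D_y^k$ since the $D_i$ commute, and hence \eqref{phiaJ:eq} reduces to exactly \eqref{vnc:eq}, producing the coefficients $\varphi_\kappa^{jk}$ of the prolonged infinitesimal generator \eqref{v:eq}.

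With this dictionary the second formula in \eqref{DHI:eq} reads, for every pair with $j + k \geq 1$,
\[
{\mathcal D}_1 I_{jk} = I_{j+1,k} + \sum_{\kappa=1}^r R_1^\kappa\, \iota(\varphi_\kappa^{jk}), \qquad
{\mathcal D}_2 I_{jk} = I_{j,k+1} + \sum_{\kappa=1}^r R_2^\kappa\, \iota(\varphi_\kappa^{jk}),
\]
with $R_i^\kappa$ the Maurer--Cartan invariants already identified, in the general setting, within Theorem~\ref{recurrence:th}. It then remains only to compute $\iota(\varphi_\kappa^{jk})$. Since $\varphi_\kappa^{jk}$ is a differential function of the variables $(x,y,u^{(j+k)})$, the Replacement Theorem \eqref{iotaF:eq} gives $\iota\big(\varphi_\kappa^{jk}(x,y,u^{(j+k)})\big) = \varphi_\kappa^{jk}(H_1,H_2,I^{(j+k)})$; and because $G$ acts transitively on $M$, the admitted cross-sections normalize $x = y = u = 0$, so that $H_1 = H_2 = I_{00} = 0$ and this collapses to $\varphi_\kappa^{jk}(0,0,I^{(j+k)})$. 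Substituting into the two displayed identities yields precisely \eqref{rf:eq}.

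Finally I would record why the statement is confined to $j + k \geq 1$: the order-zero relations for ${\mathcal D}_i H^1$, ${\mathcal D}_i H^2$ and ${\mathcal D}_i I_{00}$ all have vanishing left-hand sides, since under the transitivity normalization these are phantom invariants, and so instead of producing differentiated invariants they furnish, via Theorem~\ref{recurrence:th} and \eqref{MCm:eq}, the linear system that pins down the Maurer--Cartan invariants $R_i^\kappa$ appearing as coefficients in \eqref{rf:eq}. There is no substantial obstacle in this argument: the content is entirely the translation of the general formalism into surface coordinates, the only points requiring genuine care being the correct matching of multi-index incrementation with the two operators ${\mathcal D}_1$, ${\mathcal D}_2$, and the check that the iterated total derivative in \eqref{phiaJ:eq} reduces to $D_x^j D_y^k$ acting on the characteristic.
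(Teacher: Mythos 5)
Your proposal is correct and follows exactly the route the paper intends: Theorem~\ref{rf:th} is presented there as the direct specialization of Theorem~\ref{recurrence:th} to $p=2$, $q=1$, with $\iota(\varphi_\kappa^{jk})$ evaluated via \eqref{iotaF:eq} and the transitivity normalization $H_1=H_2=I_{00}=0$, which is precisely your argument. The only cosmetic slip is calling \eqref{iotaF:eq} the Replacement Theorem (the paper reserves that name for \eqref{iotaI:eq}), but the equation you actually invoke is the right one.
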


\section{Surfaces in conformal geometry}

In this section, we focus our attention on the standard action of the conformal group ${\rm SO}(4,1)$
on surfaces in ${\mathbb R}^3$, \cite{AGc}. Note that $\dim \hbox{SO}(4,1) = 10$.
A basis for its inf\/initesimal generators is
\begin{gather*}
 \frac{\partial}{\partial x}, \qquad \frac{\partial}{\partial y},\qquad \frac{\partial}{\partial u}, \qquad
 x\frac{\partial}{\partial y}-y\frac{\partial}{\partial x}, \qquad x\frac{\partial}{\partial u}-u\frac{\partial}{\partial x},\qquad y\frac{\partial}{\partial u}-u\frac{\partial}{\partial y}, \\ x\frac{\partial}{\partial x}+y\frac{\partial}{\partial y}+ u\frac{\partial}{\partial u},\qquad
(x^2-y^2-u^2) \frac{\partial}{\partial x}+2xy\frac{\partial}{\partial y}+2x u \frac{\partial}{\partial u},\\
   2 xy\frac{\partial}{\partial x}+ (y^2-x^2-u^2) \frac{\partial}{\partial y}  +2yu \frac{\partial}{\partial u},\qquad
   2xu\frac{\partial}{\partial u}+2yu \frac{\partial}{\partial y}+ (u^2-x^2-y^2) \frac{\partial}{\partial u}.
\end{gather*}

The maximal prolonged orbit dimensions \eqref{rn:eq} are $r_0=3$, $r_1=5$, $r_2=8$ and $r_3=10$.  The stabilization order is thus $s=3$.
The action is transitive on an open subset of ${\rm J}^2$ and there
are two independent dif\/ferential invariants of order $3$.
Thus, by~Theorem~\ref{cngen:th}, the dif\/ferential invariants of
order $3$ and $4$ form a generating set.
In this section we shall show that, under a certain non-degeneracy condition,
 all the dif\/ferential invariants can be written in terms of the
 derivatives of a single third order dif\/ferential invariant.

The argument goes in two steps. We f\/irst show that all the
dif\/ferential invariants of fourth order can be written in terms of
the two third order dif\/ferential invariants and their monotone derivatives, i.e.~those
obtained by applying the operators ${\mathcal D}_1^i {\mathcal D}_2^j$. Then, the commutator trick of Theorem~\ref{ci:th} allows us to reduce to a single generator.

We give two computational proofs of the f\/irst step. First
using the properties of normalized invariants,
Theorems~\ref{recurrence:th}~and~\ref{minorder:th},
and a cross-section that corresponds to a hyperbolic quadratic form,
second by using the properties of the Maurer--Cartan invariants,
Theorems~\ref{mcgen:th} and~\ref{mcsyz:th}, along with a cross-section that corresponds to a
degenerate quadratic form.
We have used the symbolic computation
software \textsc{aida} \cite{aida} to compute the
Maurer--Cartan matrix, the commutation rules and the
syzygies, and the software \emph{diffalg} \cite{diffalg,ncdiffalg}
 to operate the dif\/ferential elimination.

\subsection{Hyperbolic cross-section}

The cross-section implicitly used in \cite{Tressedi} is:
\begin{gather}\label{hypcs:eq}
x=y=u=u_x=u_y=u_{xx}=u_{yy}=u_{xxy}=u_{xyy}=0,\qquad u_{xy} = 1.
\end{gather}

Thus, there are two basic third order dif\/ferential invariants:
\[
I_{30} = \iota(u_{xxx}),\qquad I_{03} = \iota(u_{yyy}),
\]
and $5$ of order $4$, given by invariantization of the fourth order jet coordinates: $I_{jk} = \iota(u_{jk})$, $j+k=4$.  Since \eqref{hypcs:eq} def\/ines a minimal order cross-section, Theorem~\ref{minorder:th} implies that
$\{I_{30}, I_{03},I_{31},I_{22},I_{13}\}$ is a generating set of dif\/ferential invariants.

To prove Theorem~\ref{conformal:th}, we f\/irst show that $I_{31}$, $I_{13}$ and $I_{22}$ can be written in terms of
$\{I_{30},I_{03}\}$ and their monotone derivatives.
Using formula~\eqref{MCm:eq}, the Maurer--Cartan matrix is found to have the form
\[
R =
- \begin{pmatrix}
 1&0&0&\phi&0&1&0&\kappa&\sigma&\phi
\\
0&1&0&\psi&1&0&0&\sigma&\tau&-\psi\end{pmatrix}
\]
where
\begin{gather*}
 \phi=-\tfrac{1}{4}\,I_{{30}},\qquad
\psi=\tfrac{1}{4}\,I_{{03}},\qquad \tau=1-\tfrac{1}{2}\,I_{{13}}-\tfrac{1}{8}\,{I_{{03}}}^{2},\\
\sigma=\tfrac{1}{8}\,I_{{30}}I_{{03}}-\tfrac{1}{2}\,I_{{22}},\qquad
\kappa=1-\tfrac{1}{2}\,I_{{31}}-\tfrac{1}{8}\,{I_{{30}}}^{2}.
\end{gather*}
The f\/irst two are, in fact, the commutator invariants since, by \eqref{Y:eq}, the invariant derivations~${\mathcal D}_1$ and ${\mathcal D}_2$ satisfy the
commutation rule:
\begin{gather}\label{comm2:eq}
   [{\mathcal D}_2,{\mathcal D}_1] = \phi\, {\mathcal D}_1 + \psi\,{\mathcal D}_2 .
\end{gather}
Implementing \eqref{DHI:eq},~\eqref{comsyz:eq},
we deduce the following relationships among
$\{I_{30}, I_{03}, I_{40}, I_{31},I_{22}$, $I_{13},I_{04}\}$:
\begin{alignat*}{3}
& {E_{301}}: \quad && {\mathcal D}_1(I_{30})-3\,{I_{22}}
+\tfrac{3}{4}\,{I_{30}}\,{I_{03}}-{I_{40}},&
\\
& {E_{302}}: && {\mathcal D}_2(I_{30})-3\,{I_{13}}
-\tfrac{3}{4}\,{{I_{03}}}^{2}+6-{I_{31}},&
\\
& {E_{031}}: && {\mathcal D}_1(I_{03})-3\,{I_{31}}
-\tfrac{3}{4}\,{{I_{30}}}^{2}+6-{I_{13}},&
\\
&{E_{032}}: && {\mathcal D}_2(I_{03})-3\,{I_{22}}
+\tfrac{3}{4}\,{I_{30}}\,{I_{03}}-{I_{04}},&
\\
& {S_{14}}: && {\mathcal D}_2(I_{13})-{\mathcal D}_1(I_{04})
+\tfrac{3}{4}\,{I_{03}}\,{I_{22}}
-\tfrac{1}{4}\,{I_{03}}\,{I_{04}}+{I_{30}}\,{I_{13}},&
\\
&{S_{23}}: && {\mathcal D}_2(I_{22})-{\mathcal D}_1(I_{13})
-\tfrac{3}{2}\,{I_{03}}\,({I_{31}}+{I_{13}})
-\tfrac{1}{4}\,{I_{30}}\,({I_{22}}+{I_{04}})
-\tfrac{1}{4}\,{I_{03}}\,({{I_{30}}}^{2}+{{I_{03}}}^{2}-20),&
\\
&{S_{32}}: && {\mathcal D}_2(I_{31})-{\mathcal D}_1(I_{22})
+\tfrac{1}{4}\,{I_{03}}\,( {I_{40}}+{I_{22}})
+\tfrac{3}{2}\,{I_{30}}\,({I_{13}}+{I_{31}})
+\tfrac{1}{4}\,{I_{30}}\,({{I_{03}}}^{2}
         +{{I_{30}}}^{2}-20),&
\\
&{S_{41}}: &&{\mathcal D}_2(I_{40})-{\mathcal D}_1(I_{31})
-{I_{03}}\,{I_{31}}
-\tfrac{3}{4}\,{I_{30}}\,{I_{22}}
+\tfrac{1}{4}\,{I_{30}}\,{I_{40}}.
&
\end{alignat*}
Taking the combination
$E_{302}-3\,E_{031}$
and
$E_{031}-3\,E_{302}$
we obtain:
\begin{gather*}
I_{31}
= \tfrac{3}{2}-\tfrac{1}{8}\,{\mathcal D}_2(I_{30})
+\tfrac{3}{8}\,{\mathcal D}_1(I_{03})
+{\tfrac{3}{32}}\,{(I_{03})}^{2}
-{\tfrac{9}{32}}\,{(I_{30})}^{2}
,
\\
I_{13}
=
\tfrac{3}{2}-\tfrac{1}{8}\,{\mathcal D}_1(I_{03})
+\tfrac{3}{8}\,{\mathcal D}_2(I_{30})
-{\tfrac{9}{32}}\,{(I_{03})}^{2}
+{\tfrac{3}{32}}\,{(I_{30})}^{2}
.
\end{gather*}
Taking the combination
\begin{gather*}
128\,{\mathcal D}_2(S_{32})-48\,{\mathcal D}_1(S_{41})-16\,{\mathcal D}_1(S_{23})
-36\,I_{03}S_{41}-12\,I_{03}S_{23}+108\,I_{30}S_{32} +4\,I_{30}S_{14}
\\
\qquad{}
-48\,{\mathcal D}_1{\mathcal D}_2(E_{301})-16\,{\mathcal D}_2^{2}(E_{302}) +48\,{\mathcal D}_2^{2}(E_{031})+16\,{\mathcal D}_1^{2}(E_{031})
\\
\qquad{}+36\,I_{03}{\mathcal D}_1(E_{031})+88\,I_{30}{\mathcal D}_2(E_{031})
-12\,I_{30}{\mathcal D}_1(E_{301})-4\,I_{03}{\mathcal D}_2(E_{301})
\\
\qquad{}
+36\,I_{30}{\mathcal D}_2(E_{302})+\left( 18\,{I_{03}}^{2}
       +40\,{I_{30}}^{2}+48\,{\mathcal D}_2(I_{30})+24\,{\mathcal D}_1(I_{03})
     \right)    E_{031}
\\
\qquad{}
+ \left( 18\,I_{30}I_{03}-12\,{\mathcal D}_1(I_{30})+32\,{\mathcal D}_2(I_{03}) \right) E_{301}
\\
\qquad{}
+ \left( 42\,{I_{30}}^{2}+48\,{\mathcal D}_2(I_{30}) \right) E_{302}
+ \left( 2\,I_{30}I_{03}+4\,{\mathcal D}_1(I_{30}) \right)E_{032}
\end{gather*}
leads to:
\[
I_{22} = 1-\frac{A_{22}}{64\,B_{22}},
\]
where
\begin{gather*}
A_{22} =
64\,{\mathcal D}_2^{3}(I_{30})
-\,48\,{\mathcal D}_1^{2}{\mathcal D}_2(I_{30})-48\,{\mathcal D}_1{\mathcal D}_2^{2}(I_{03})
-64\,{\mathcal D}_1^{3}(I_{03}) \\
\phantom{A_{22} =}{} + \left( 36\,{\mathcal D}_1^{2}(I_{03})
            +48\,{\mathcal D}_2^{2}(I_{03})
            -52\,{\mathcal D}_1{\mathcal D}_2(I_{30})
        \right) I_{03}
\\
\phantom{A_{22} =}{}- \left(
36\,{\mathcal D}_2^{2}(I_{30})+24\,{\mathcal D}_1^{2}(I_{30})
-28\,{\mathcal D}_1{\mathcal D}_2(I_{03}) \right) I_{30}
\\
\phantom{A_{22} =}{} +
36\,{{\mathcal D}_2(I_{03})}^{2}
-24\,{{\mathcal D}_1(I_{30})}^{2}+24\,{{\mathcal D}_1(I_{03})}^{2}-24\,{{\mathcal D}_2(I_{30})}^{2}
-12\,{\mathcal D}_2(I_{30}){\mathcal D}_1(I_{03})
\\
\phantom{A_{22} =}{} +
\left(30\,{\mathcal D}_1(I_{03}) -8\,{\mathcal D}_2(I_{30}) \right) {I_{03}}^{2}
+ \left(52\,{\mathcal D}_2(I_{03}) -42\,{\mathcal D}_1(I_{30}) \right)
 I_{30}{ I_{03}}
\\
\phantom{A_{22} =}{} - \left(30\,{\mathcal D}_2(I_{30})+2\,{\mathcal D}_1(I_{03}) \right)
{{ I_{30}}}^{2}
+ 3\,{I_{03}}^{4}-3\,I_{30}^{4}+3\,{I_{03}}^{2}-3\,I_{30}^{2},
\end{gather*}
and
\[
B_{22} = {{\mathcal D}_1(I_{30})-{\mathcal D}_2(I_{03})}.
\]

We conclude that the two third order invariants  $I_{30}$ and $I_{03}$ form a generating system.  Moreover, since the generating invariants are, up to constant multiple, commutator invariants, we can use the commutator trick of Theorem~\ref{ci:th} to generate them both from any single dif\/ferential invariant.  Indeed, when  ${\mathcal D}_2\phi\neq 0$
the commutation rule \eqref{comm2:eq} implies
that
\begin{gather}\label{psi:eq}
\psi = \frac{{\mathcal D}_2{\mathcal D}_1\phi
              -{\mathcal D}_1{\mathcal D}_2\phi -\phi{\mathcal D}_1\phi}{{\mathcal D}_2\phi}.
\end{gather}
Similarly, when ${\mathcal D}_1\psi\neq 0$ we have
\begin{gather}\label{phi:eq}
\phi = \frac{{\mathcal D}_2{\mathcal D}_1\psi
              -{\mathcal D}_1{\mathcal D}_2\psi-\psi{\mathcal D}_2\psi}{{\mathcal D}_1\psi}.
\end{gather}
Therefore, under the assumption that
\begin{gather}\label{p1p20:eq}
({\mathcal D}_1\psi)^2+({\mathcal D}_2\phi)^2\neq 0,
\end{gather}
a single dif\/ferential invariant, of order~3,
generates all the dif\/ferential invariants for surfaces in conformal geometry.

\subsection{Degenerate cross-section}

In our second approach, we choose the ``degenerate'' cross-section
\begin{gather}\label{degcs:eq}
x=y=u=u_x=u_y=u_{xx}=u_{xy} = u_{yy}=u_{xxy}=u_{xyy}=0.
\end{gather}
Implementing \eqref{MCm:eq}, the new Maurer--Cartan matrix is:
\[
R =
-\begin{pmatrix}
 1&0&0&0&1&0&-\psi&\sigma&\kappa&0
\\ 0&1&0&0&0&0&\phi&\tau&-\sigma&-\tfrac 12\,\phi\end{pmatrix},
\]
where
\[ \phi=I_{{03}},\qquad \psi=I_{{30}},\qquad\tau=\tfrac{1}{2}\,I_{{13}},
 \qquad \kappa=-\tfrac{1}{2}\,I_{{31}},\qquad\sigma=\tfrac{1}{2}\,I_{{22}}.
\]
 Again, $\phi$, $\psi$ are the commutator invariants since $
   [{\mathcal D}_2,{\mathcal D}_1] = \phi\, {\mathcal D}_1 + \psi\,{\mathcal D}_2 $.
Theorem~\ref{mcgen:th} tells us that the Maurer--Cartan invariants $\{\phi,\psi,\kappa,\tau,\sigma\}$ form a generating set.
We will show that $\{\kappa, \tau, \sigma\}$ can be written in terms of
$\{\phi,\psi\}$ and their derivatives. We write those as $\phi_{{ij}}$
to mean ${\mathcal D}^i_1{\mathcal D}^j_2( \phi)$
and similarly for $\psi$, $\kappa$, $\tau$, $\sigma$.

The  non-zero syzygies of  Theorem~\ref{mcsyz:th} are:
\begin{alignat*}{3}
& \Delta_{7}:\quad &&
\phi_{{10}}+\psi_{{01}}-2\,\tau+2\,\kappa = 0,&\\
& \Delta_{8}:&&
\sigma_{{01}}-\tau_{{10}}-\tfrac{1}{2}\,\phi -2\,\phi \,\sigma -2\,\psi \,\tau =0,&\\
& \Delta_{9}: && \sigma_{{10}}+\kappa_{{01}}-2\,\phi \,\kappa
    +2\,\psi \,\sigma =0,&\\
& \Delta_{10}:& & \tfrac{1}{2}\,\phi_{{10}}-\tau
   +\psi \,\phi =0.&
\end{alignat*}
The syzygies  $ \Delta_{10}$
and $ { \Delta_{7}}+2\,{\Delta_{10}}$
 allow us to rewrite $\tau$ and $\kappa$ in terms of $\phi, \psi$, namely:
\[
\kappa=-\tfrac{1}{2}\,\psi_{{01}}+\psi\phi,
\qquad
\tau=\tfrac{1}{2}\,\phi_{{10}}+\psi\phi,
\]
while the following combination
\[
2\,{\mathcal D}_2({\Delta_{9}})-2\,{\mathcal D}_1({\Delta_{8}})
+4\,\sigma{\Delta_{7}}-6\,\psi{\Delta_{8}}
-6\,\phi{\Delta_{9}}-2\,{\Delta_{10}}
\]
allows to express $\sigma$ in terms of
$\phi$, $\psi$, $\tau$, $\kappa$ and their derivatives:
\[
 \sigma =
{\frac {\tau_{{20}}+\kappa_{{02}} = 5\,\psi\tau_{{10}}-5\,\phi\kappa_{{01}}
    +2\,\psi_{{10}}\tau-2\,\phi_{{01}}\kappa +  6\,{\phi}^{2}\kappa
     +(6\,{\psi}^{2}+1)\,\tau
       +\frac 12\,\psi\phi}{4(\kappa-\tau)}}.
\]
Observe that this exhibits a singular behavior at \textit{umbilic points} where $\kappa = \tau $.

Finally, since the generating invariants $\{\phi,\psi\}$
are, up to a constant multiple, commutator invariants, we can generate one from the other by the same formulas \eqref{psi:eq},~\eqref{phi:eq}, under the assumption that
\eqref{p1p20:eq} holds.

\section{Projective surfaces}

The inf\/initesimal generators of the projective action of ${\rm PSL}(4)$ on ${\mathbb R}^3$ are
\begin{gather*} \frac{\partial}{\partial x}, \qquad \frac{\partial}{\partial y},\qquad \frac{\partial}{\partial u},
\qquad
 x \frac{\partial}{\partial x}, \qquad y \frac{\partial}{\partial x},\qquad
  u  \frac{\partial}{\partial x}, \\
  x \frac{\partial}{\partial y}, \qquad y \frac{\partial}{\partial y},\qquad u  \frac{\partial}{\partial y}, \qquad
  x \frac{\partial}{\partial u}, \qquad y \frac{\partial}{\partial u},\qquad u  \frac{\partial}{\partial u},\\
x^2 \frac{\partial}{\partial x} + xy \frac{\partial}{\partial y}+xu\frac{\partial}{\partial u}, \qquad
x y \frac{\partial}{\partial x} + y^2  \frac{\partial}{\partial y}+ y u\frac{\partial}{\partial u}, \qquad
x u \frac{\partial}{\partial x} + y u  \frac{\partial}{\partial y}+ u^2 \frac{\partial}{\partial u} .
\end{gather*}
The generic prolonged orbit dimensions are $r_0=3$, $r_1=5$, $r_2=8$, $r_3=12$ and $r_4=15 = \dim {\rm PSL}(4)$, and so the stabilization order is $s=4$.

We adopt the same strategy as in previous section to show that all
the dif\/ferential invariants are generated by a single fourth order
dif\/ferential invariants. The computations and formulae are nonetheless
more challenging.

The section implicitly used in \cite{Tressedi} is:
\begin{gather}
x=y=u=u_x=u_y=u_{xx}=u_{yy}=u_{xxy}=u_{xyy}=u_{xxxy}=u_{xxyy}=u_{xyyy}=0,\nonumber\\
u_{xy} = u_{xxx} = u_{yyy} =1.\label{prcs:eq}
\end{gather}
Thus, there are two basic fourth order dif\/ferential invariants:
\[
I_{40} = \iota(u_{xxxx}),\qquad I_{04} = \iota(u_{yyyy}),
\]
and $6$ of order $5$, given by invariantization of the f\/ifth order jet coordinates.  Theorem~\ref{minorder:th} implies that the invariants $\{I_{40},I_{04},I_{41},I_{32},I_{23},I_{14}\}$ forms a generating set of dif\/ferential invariants.

The Maurer--Cartan matrix \eqref{MCm:eq} is
\begin{gather*}
R =
- \left(\!\begin{array}{ccccccccccccccc}
1 & 0 & 0 & -2\,\psi & 0 & \kappa
& -\tfrac{1}{2} & -\psi & \tau & 0 & 1 & -3\,\psi
& -\tau & \tfrac{1}{4}-\kappa & \tfrac{1}{2}\,\sigma-\tfrac{3}{8}\,\psi
\\
0 & 1 & 0
& \phi & -\tfrac{1}{2} & \sigma
 & 0 & 2\,\phi & \eta
& 1 & 0 & 3\,\phi &
\tfrac{1}{4}-\eta & -\sigma & \tfrac{3}{8}\,\phi+\tfrac{1}{2}\,\tau
\end{array}\!\right)\!,\!
\end{gather*}
where
\begin{gather*}
\phi=-\tfrac{1}{3}\,I_{{04}},\qquad \psi=\tfrac{1}{3}\,I_{{40}},\qquad \eta=-\tfrac{1}{2}\,I_{{14}}-\tfrac{1}{4},
\\
\tau=-\tfrac{1}{2}\,I_{{23}}+\tfrac{1}{4}\,I_{{04}},\qquad
\sigma=-\tfrac{1}{2}\,I_{{32}}+\tfrac{1}{4}\,I_{{40}},\qquad
\kappa=-\tfrac{1}{2}\,I_{{41}}-\tfrac{1}{4} .
\end{gather*}
By Theorem~\ref{mcgen:th} $\{\phi,\psi,\tau,\sigma, \kappa \}$
form a generating set of dif\/ferential invariants.
The invariant derivations satisfy the commutation rule;
\[
   [{\mathcal D}_2,{\mathcal D}_1] = \phi\, {\mathcal D}_1 + \psi\,{\mathcal D}_2
\]
and so $\phi$, $\psi $ are the commutator invariants.

The nonzero syzygies of Theorem~\ref{mcsyz:th} of the generating set
$\{\phi,\psi, \eta,\sigma, \tau, \kappa\}$ are given by:
\begin{alignat*}{3}
& \Delta_{4}: \quad &&
\phi_{{10}}+2\,\psi_{{01}}+2\,\eta-\phi\,\psi-\tfrac{1}{2} =0,&
\\
& \Delta_6: &&
\sigma_{{10}}-\kappa_{{01}}-\tfrac{3}{8}\,\phi+3\,\phi\,\kappa+2\,\psi\,\sigma=0,&
\\
& \Delta_{8}: & &
2\,\phi_{{10}}+\psi_{{01}}-2\,\kappa+\phi\,\psi+ \tfrac{1}{2}=0,&
\\
& \Delta_{9}: &&
\eta_{{10}}-\tau_{{01}}-\tfrac{3}{8}\,\psi+2\,\phi\,\tau+3\,\psi\,\eta=0,&
\\
& \Delta_{12}: & &
{\Delta_{4}}+{ \Delta_{8}},
\qquad
\Delta_{13}: \ \
- \Delta_{9},
\qquad
\Delta_{14}: \ \
 - \Delta_{6} ,&
\\
& \Delta_{15}: & &
\tfrac{1}{2}\,\tau_{{10}}-\tfrac{1}{2}\,\sigma_{{01}}
+\tfrac{3}{8}\,\phi_{{10}}+\tfrac{3}{8}\,\psi_{{01}}-\tfrac{1}{4}\,\kappa+\tfrac{1}{4}
\,\eta+2\,\phi\,\sigma+2\,\psi\,\tau=0.&
\end{alignat*}
From ${\Delta_{4}}$ and ${ \Delta_{8}}$ we immediately obtain:
\[
\eta=
\tfrac{1}{4}-\tfrac{1}{2}\,\phi_{{10}}
-\psi_{{01}}+\tfrac{1}{2}\,\phi\psi,
\qquad
\kappa=
\tfrac{1}{4}+\phi_{{10}}+\tfrac{1}{2}\,\psi_{{01}}+\tfrac{1}{2}\,\phi\psi.
\]
Let $P_1$, $P_2$, $P_3$ be the dif\/ferential polynomials obtained from
 $\Delta_6$, $\Delta_9$, $\Delta_{15}$ after substitution of~$\kappa$ and $\tau$:
\begin{gather*}
P_1=-\tfrac{1}{2}\,\tau_{{10}}+\tfrac{1}{2}\,\sigma_{{01}}-2\,\phi\,\sigma-2\,\tau\,\psi,
\\
P_2=\tfrac{1}{2}\,\phi_{{20}}+\psi_{{11}}-\tfrac{1}{2}\,\phi_{{10}}
\psi-\tfrac{1}{2}\,\phi\psi_{{10}}+\tau_{{01}}-\tfrac{3}{8}\,\psi
-2\,\phi\,\tau+\tfrac{3}{2}\,\psi\,\phi_{{10}}+3\,\psi\,\psi_{{01}}
-\tfrac{3}{2}\,\phi\,{\psi}^{2},
\\
P_3=-\sigma_{{10}}+\phi_{{11}}+\phi\phi_{{10}}
+\tfrac{3}{2}\,\psi\phi_{{01}}+\tfrac{1}{2}\,\psi_{{02}}+\tfrac{1}{2}\,\phi
\psi_{{01}}-\tfrac{3}{8}\,\phi-3\,\phi\,\phi_{{10}}
-\tfrac{3}{2}\,\phi\,\psi_{{01}}\\
\phantom{P_3=}{} -\tfrac{3}{2}\,{\phi}^{2}\psi-2\,\psi\,\sigma.
\end{gather*}

To obtain $\tau$ and $\sigma$ we proceed with a \emph{differential elimination}
 \cite{diffalg,hubert03d,ncdiffalg,hubert05}  on $\{P_1,P_2,P_3\}$. We use a ranking where
\begin{gather*}
\psi<\phi<\psi_{{01}}<\phi_{{01}}<\psi_{{10}}<\phi_{{10}} <
\psi_{{02}}<\psi_{{11}}<\phi_{{11}}<\phi_{{20}}<\>\cdots\\ \qquad
\cdots\><\tau<\sigma <
\tau_{{01}}<\sigma_{{01}}<\tau_{{10}}<\sigma_{{10}}<\tau_{{02}} <
\sigma_{{02}}<\tau_{{11}}<\sigma_{{11}}<\tau_{{20}}<\sigma_{{20}}<\>\cdots\>.
\end{gather*}
For this ranking, the leaders of $P_1$, $P_2$, $P_3$ are, respectively,
 $\tau_{{10}}$, $\tau_{{01}}$, $\sigma_{{10}}$.

We f\/irst form the \emph{$\Delta$-polynomial} (cross-derivative) of
$P_1$ and $P_2$ and \emph{reduce} it with respect to
$\{P_1,P_2,P_3\}$. We obtain a polynomial $P_4$ with leader
$\sigma_{{02}}$. We then take the $\Delta$-polynomial of~$P_3$ and
$P_4$ and reduce it with respect to $\{P_1,P_2,P_3,P_4\}$ to obtain a
dif\/ferential polynomial $P_5$ with leader $\sigma_{{01}}$.
On one hand, if we reduce now $P_4$ by $\{P_1,P_2,P_3,P_5\}$ we obtain
a dif\/ferential polynomial~$P$
 with leader $\sigma$.
On the other hand, if we form the $\Delta$-polynomial of~$P_3$ and~$P_5$, reduce it by $\{P_1,P_2,P_3,P_5\}$ we obtain a dif\/ferential
polynomial $Q$ with leader $\sigma$. The polynomial~$P$ and~$Q$ are linear in $\sigma$ and $\tau$ so that we can
solve for those two invariants in terms of $\phi$, $\psi$ and their
derivatives.  The explicit formulas are rather long (available from the authors on request), but not particularly enlightening.  We conclude that the commutator invariants~$\phi$,~$\psi$ form a~generating set.  Finally, we can use  either \eqref{psi:eq} or \eqref{phi:eq}, to generate one commutator invariant from the other, and thereby establish Theorem~\ref{projective:th}.

\subsection*{Acknowledgements}

This research was initiated during the f\/irst author's visit to the Institute for Mathematics and its Applications (I.M.A.) at the University of Minnesota during 2007--2008 with additional support from the Fulbright visiting scholar program.
The second author is supported in part by NSF Grant DMS 05--05293.

\pdfbookmark[1]{References}{ref}

\LastPageEnding


\begin{thebibliography}{99}

\footnotesize\itemsep=0pt

\bibitem{AGp}
 Akivis M.A., Goldberg V.V.,
Projective dif\/ferential geometry of submanifolds,
  {\it North-Holland Mathematical Library}, Vol.~49,
North-Holland Publishing Co., Amsterdam, 1993.

\bibitem{AGc}
Akivis M.A., Goldberg V.V.,
Conformal dif\/ferential geometry and its generalizations,
Pure and Applied Mathematics (New York), John Wiley \& Sons Inc., New
  York, 1996.

\bibitem{BaEaGr}
Bailey T.N., Eastwood M.G., Graham C.R.,
Invariant theory for conformal and CR geometry,
{\it Ann. Math.} {\bf 139} (1994), 491--552.

\bibitem{diffalg}
Boulier F., Hubert E.,
\textsc{diffalg}: description, help pages and examples of
  use,
Symbolic Computation Group, University of Waterloo, Ontario, Canada,
  1998,
 \url{http://www.inria.fr/cafe/Evelyne.Hubert/diffalg}.

\bibitem{Cartanrm}
Cartan E.,
La th\'eorie des groupes f\/inis et continus et la g\'eom\'etrie
  dif\/f\'erentielle trait\'ees par la m\'ethode du rep\`ere mobil,
{\it Cahiers scientifiques}, Number~18  Gauthier-Villars, Paris, 1937.

\bibitem{FOmcII}
Fels M., Olver P.J.,
Moving coframes. II. Regularization and theoretical
  foundations,
{\it Acta Appl. Math.} {\bf 55} (1999), 127--208.

\bibitem{FefGra}
Fef\/ferman C., Graham C.R.,
Conformal invariants,
in  \'Elie Cartan et les Math\'ematiques d'Aujourd'hui,
Ast\'erisque, hors s\'erie, Soc. Math. France, Paris, 1985, 95--116.


\bibitem{FC}
Fubini G., \v Cech E.,
 Introduction \`a la G\'eom\'etrie Projective Dif\/f\'erentielle
  des Surfaces,
Gauthier-Villars, Paris, 1931.

\bibitem{Greenc}
 Green M.L.,
 The moving frame, dif\/ferential invariants and rigidity theorems for
  curves in homogeneous spaces,
{\it Duke Math. J.} {\bf 45} (1978), 735--779.

\bibitem{Griffithsmf}
 Grif\/f\/iths P.A.,
On Cartan's method of Lie groups as applied to uniqueness and
  existence questions in dif\/ferential geometry,
{\it Duke Math. J.} {\bf 41} (1974), 775--814.

\bibitem{Gug}
 Guggenheimer H.W.,
 Dif\/ferential geometry,
McGraw-Hill Book Co., Inc., New York, 1963.

\bibitem{hubert03d}
Hubert E.,
Notes on triangular sets and triangulation-decomposition algorithms
II: Dif\/ferential systems,
in  Symbolic and Numerical
  Scientif\/ic Computing, Editors F.~Winkler and U.~Langer,  {\it Lecture Notes in Computer Science},
  no.~2630,
  Springer Verlag Heidelberg, 2003, 40--87.

\bibitem{ncdiffalg}
Hubert E.,
 \textsc{diffalg}: extension to non commuting derivations,
INRIA, Sophia Antipolis, 2005,\\
\url{http://www.inria.fr/cafe/Evelyne.Hubert/diffalg}.

\bibitem{hubert05}
Hubert E.,
Dif\/ferential algebra for derivations with nontrivial commutation
  rules,
{\it J. Pure Applied Algebra} {\bf  200} (2005), 163--190.

\bibitem{aida}
Hubert E.,
The \textsc{maple} package \textsc{aida} -- algebraic invariants
  and their dif\/ferential algebras,
INRIA, 2007,
\url{http://www.inria.fr/cafe/Evelyne.Hubert/aida}.

\bibitem{hubert08a}
Hubert E.,
Dif\/ferential invariants of a Lie group action: syzygies on a generating set,
in preparation.


\bibitem{hubert08b}
Hubert E.,
Generation properties of Maurer--Cartan invariants,
in preparation.

\bibitem{hubert07}
Hubert E., Kogan I.A.,
Rational invariants of a group action. Construction and rewriting,
{\it J. Symbolic Comput.} {\bf 42} (2007), 203--217.

\bibitem{hubert07b}
Hubert E., Kogan I.A.,
Smooth and algebraic invariants of a group action. Local and global
  constructions,
 {\it Found. Comput. Math.}, in press.


\bibitem{ivey03}
 Ivey T.A., Landsberg J.M.,
Cartan for beginners: dif\/ferential geometry via moving frames
  and exterior dif\/ferential systems,  {\it  Graduate Studies in
  Mathematics}, Vol.~61,
American Mathematical Society, Providence, RI, 2003.

\bibitem{Jensen}
Jensen G.,
Higher order contact of submanifolds of homogeneous spaces,
{\it Lecture Notes in Mathematics}, Vol.~610,
 Springer-Verlag, Berlin~-- New York, 1977.

\bibitem{KOivb}
Kogan I.A., Olver P.J.,
Invariant Euler--Lagrange equations and the invariant variational
  bicomplex, {\it  Acta Appl. Math.} {\bf 76} (2003), 137--193.

\bibitem{LieCG}
Lie S.,
 Vorlesungen \"uber Continuierliche Gruppen mit Geometrischen
  und anderen Anwendungen,
Chelsea Publishing Co., Bronx, New York, 1971.

\bibitem{MB1}
 Mar\'\i~Bef\/fa G.,
The theory of dif\/ferential invariants and KdV Hamiltonian evolutions,
{\it  Bull. Soc. Math. France} {\bf 127} (1999), 363--391.

\bibitem{MBdicc}
Mar\'\i~Bef\/fa G.,
Relative and absolute dif\/ferential invariants for conformal curves,
{\it J. Lie Theory} {\bf 13} (2003), 213--245.

\bibitem{MBp}
Mar\'\i~Bef\/fa G.,
Projective-type dif\/ferential invariants and geometric curve evolutions of KdV-type in f\/lat homogeneous manifolds,
{\it Ann. Institut Fourier}, to appear.

\bibitem{MBO}
 Mar\'\i~Bef\/fa G., Olver P.J.,
Dif\/ferential invariants for parametrized projective surfaces,
{\it  Comm. Anal. Geom.} {\bf 7} (1999), 807--839.

\bibitem{Mansfield}
 Mansf\/ield E.L.,
Algorithms for symmetric dif\/ferential systems,
{\it  Found. Comput. Math.} {\bf 1} (2001), 335--383.

\bibitem{O}
 Olver P.J.,
 Applications of Lie groups to dif\/ferential equations,
{\it Graduate Texts in Mathematics}, no.~107, Springer-Verlag, New
  York, 1986.

\bibitem{E}
 Olver P.J.,
Equivalence, invariants and symmetry,
Cambridge University Press, 1995.

\bibitem{Osinmf}
 Olver P.J.,
Moving frames and singularities of prolonged group actions,
{\it  Selecta Math. (N.S.)} {\bf 6} (2000), 41--77.

\bibitem{Ochina}
 Olver P.J.,
A survey of moving frames,
in Computer Algebra and Geometric Algebra with Applications,
Editors H.~Li, P.J.~Olver and G.~Sommer, {\it Lecture Notes in Computer Science}, Vol.~3519, Springer--Verlag, New York, 2005, 105--138.

\bibitem{Ogdi}
Olver P.J.,
Generating dif\/ferential invariants,
{\it J. Math. Anal. Appl.} {\bf 333} (2007), 450--471.

\bibitem{Osurf}
 Olver P.J.,
Dif\/ferential invariants of surfaces,
 Preprint, University of Minnesota, 2007.

\bibitem{Ov}
Ovsiannikov L.V.,
 Group analysis of dif\/ferential equations,
Academic Press, New York, 1982.

\bibitem{Simon}
Simon U.,
The Pick invariant in equiaf\/f\/ine dif\/ferential geometry,
{\it  Abh. Math. Sem. Univ. Hamburg} {\bf  53} (1983), 225--228.

\bibitem{Spivak3}
Spivak M.,
 A comprehensive introduction to dif\/ferential geometry, Vol.~III, 2nd ed.,
Publish or Perish Inc., Wilmington, Del., 1979.

\bibitem{Tressedi}
Tresse A.,
Sur les invariants des groupes continus de transformations.
{\it Acta Math.} {\bf 18} (1894), 1--88.

\bibitem{Vessiotc}
Vessiot E.,
Contribution \`a la g\'eom\'etrie conforme. {T}h\'eorie des surfaces.
  {I},
{\it Bull. Soc. Math. France} {\bf 54} (1926), 139--179.

\end{thebibliography}
\end{document}